\newtheorem{theo}{Theorem}[section]
\newtheorem{lemm}[theo]{Lemma}
\newtheorem{coro}[theo]{Corollary}
\newtheorem{prop}[theo]{Proposition}
\newtheorem{defi}[theo]{Definition}
\newtheorem{example}[theo]{Example}
\newcommand{\lland}{\mbox{ and }}
\newcommand{\w}{\mbox{\rm wt}}
\newcommand{\ba}{{\mathbf a}}
\newcommand{\bb}{{\mathbf b}}
\newcommand{\bu}{\mathbf{u}}
\newcommand{\be}{\mathbf{e}}
\newcommand{\one}{{\mathbf 1}}
\newcommand{\Z}{\mathbb{Z}}
\newcommand{\cA}{{\mathcal{A}}}
\newcommand{\cC}{{\mathcal{C}}}
\newcommand{\cD}{{\mathcal{D}}}
\newcommand{\cQ}{{Q}}
\newcommand{\cG}{{\mathcal{G}}}
\newcommand{\cR}{{\mathcal{R}}}
\newcommand{\rB}{{\overline{r}}}
\newcommand{\tauB}{{\overline{\tau}}}
\newcommand{\Supp}{\operatorname{Supp}}
\newcommand{\ccolon}{\colon\!}
\newcommand{\gen}[1]{\langle #1 \rangle}
\newcommand{\genset}{$x_1,\dots,x_{\sigma}$; $r_1,\dots,r_{\tau}$; $s_1,s_{\upsilon}$~}
\newcommand{\zz}{\Z_2\Z_4}
\newcommand{\qq}{\Z_2\Z_4\cQ_8}
\title{Hadamard $\qq$-codes. Constructions based on the rank and dimension of the kernel.\thanks{This work has been
partially supported by the Spanish MICINN grant TIN2013-40524-P and the Catalan grant 2014SGR-691.}}
\author{P. Montolio and J. Rif\`{a} %
\thanks{P. Montolio is with the Computing, Multimedia and Telecommunication Studies, Universitat Oberta de Catalunya.} %
\thanks{J.~Rif\`{a} is with the Department of Information and Communications Engineering, Universitat Aut\`{o}noma de Barcelona.}
}
\begin{document}
\maketitle

\begin{abstract}
This work deals with Hadamard $\Z_2\Z_4\cQ_8$-codes, which are binary codes after a Gray map from a subgroup of the direct product of $\Z_2$, $\Z_4$ and $\cQ_8$ groups, where $\cQ_8$ is the quaternionic group. In a previous work, these kind of codes were classified in five shapes. In this paper we analyze the allowable range of values for the rank and dimension of the kernel, which depends on the particular shape of the code. We show that all these codes can be represented in a standard form, from a set of generators, which help to a well understanding of the characteristics of each shape. The main results are the characterization of Hadamard $\qq$-codes as a quotient of a semidirect product of $\Z2\Z4$-linear codes and, on the other hand, the construction of  Hadamard $\qq$-codes code with any given pair of allowable parameters for the rank and dimension of the kernel.
\end{abstract}\begin{IEEEkeywords}
Dimension of the kernel, error-correcting codes, Hadamard codes, rank, $\zz$-codes, $\qq$-codes.
\end{IEEEkeywords}

\section{Introduction}

Non-linear codes (like $\zz$-linear codes and $\qq$-codes) have received a great deal of attention since \cite{z4}. The codes this paper deals with can be characterized as the image of a subgroup, by a suitable Gray map, of an algebraic group like the direct product of $\Z_2$, $\Z_4$ and $\cQ_8$, the quaternionic group of order 8 \cite{rp}.

Hadamard matrices with a subjacent algebraic structure have been deeply studied, as well as the links with other topics in algebraic combinatorics or applications \cite{kar}. We quote just a few papers about this subject \cite{ito,flan,lfh}, where we can find beautiful equivalences between Hadamard groups, 2-cocyclic matrices and relative difference sets. On the other hand, from the side of coding theory, it is desirable that the algebraic structures we are dealing with preserves the Hamming distance. This is the case, for example, of the $\zz$-linear codes which has been intensively studied during the last years~\cite{z4,fv}. More generally, the propelinear codes and, specially those which are translation invariant, are particularly interesting because the subjacent group structure has the property that both, left and right product, preserve the Hamming distance. Translation invariant propelinear codes has been characterized as the image of a subgroup by a suitable Gray map of a direct product of $\Z_2$, $\Z_4$ and $\cQ_8$~\cite{rp}.

In this paper we analyze codes that have both properties, being Hadamard and $\Z_2\Z_4\cQ_8$-codes. These codes were previously studied and classified~\cite{rio-2013} in five shapes. The aim of this paper is to go further. First of all by giving an standard form for a set of generators of the code, depending on the parameters, which helps to understand of the characteristics of each shape and then by putting the focus in an exact computation of the values of the rank and dimension of the kernel.
One of the main results is to characterize the $\qq$-codes as a quotient of a semidirect product of Hadamard $\Z2\Z_4$-linear codes. The second main result is to construct, using the above characterization, Hadamard $\qq$-codes whose values for the rank and dimension of the kernel are any allowable pair previously chosen.

The structure of the paper is as follows. Section~\ref{sec:preliminaries} introduces the notation, the basic classification from~\cite{rio-2013} and preliminary concepts; Section \ref{sec:standard} shows the standard form of generators that allows to represent any Hadamard $\Z_2\Z_4\cQ_8$-code in a unique way, this section finishes with two important theorems which characterizes a Hadamard $\qq$-code as a quotient of a semidirect product of $\Z_2\Z_4$-linear codes (Theorems~\ref{main} and \ref{prop:z2z4}); Section \ref{sec:kernelrank} studies the values of the rank and dimension of the kernel, depending on the shape and parameters of the $\qq$-code and in Section~\ref{sec:construction} we give the constructions of $\qq$-codes fulfilling the requirements for the prefixed values of  the dimension of the kernel and rank. We finish this last section with Theorem~\ref{fites}, a compendium of the results reached in this section, and a couple of examples about the constructions and achievement of codes with all allowable pair for the values of rank and dimension of the kernel.

\section{Preliminaries}\label{sec:preliminaries}

Almost all the definitions and concepts below, in these preliminaries, can be found in \cite{rio-2013}.

Let $\Z_2$ and $\Z_4$ denote the binary field and the ring of integers modulo 4, respectively. Any non-empty subset of $\Z_2^n$ is called a binary code and a linear subspace of $\Z_2^n$ is called a \emph{binary linear code} or a \emph{$\Z_2$-linear code}. Let $\w(v)$ denote the {\em Hamming weight} of a
vector $v \in \Z_2^n$ (i.e., the number of its nonzero components),
and let $d(v, u)=\w(v+u)$, the {\em Hamming distance} between two
vectors $v,u\in\Z_2^n$.

Let $\cQ_8$ be the {\em quaternionic group} on eight elements. The following equalities provides a presentation and the list of elements of $\cQ_8$:
\begin{equation*}
\begin{split}
\cQ_8=&\gen{\ba,\bb\, :\,\ba^4=\ba^2\bb^2=\one,\bb\ba\bb^{-1}=\ba^{-1}} =
\{\one,\ba,\ba^2,\ba^3,\bb,\ba\bb,\ba^2\bb,\ba^3\bb\}.
\end{split}
\end{equation*}

Given three non-negative integers $k_1$, $k_2$ and $k_3$, denote as $\cG$ the group $\Z_2^{k_1}\times\Z_4^{k_2}\times\cQ_8^{k_3}$. Any element of $\cG$ can be represented as a vector where the first $k_1$ components belong to $\Z_2$, the next $k_2$ components belong to $\Z_4$ and the last $k_3$ components belong to $\cQ_8$.

We use the multiplicative notation for $\cG$ and we denote by $\be$ the identity element of the group and by $\bu$ the element of order two: $\be=(0,\stackrel{k_1+k_2}{\dots},0,\one,\stackrel{k_3}{\dots},\one)$ and $\bu=(1,\stackrel{k_1}{\dots},1,2,\stackrel{k_2}{\dots},2,\ba^2,\stackrel{k_3}{\dots},\ba^2)$.

We call {\em Gray map} the function $\Phi$:
$$
\Phi:\;\Z_2^{k_1}\times\Z_4^{k_2}\times\cQ_8^{k_3}\;\longrightarrow\;\Z_2^{k_1+2k_2+4k_3},
$$
acting componentwise in such a way that over the binary part is the identity, over the quaternary part acts as the usual Gray map, so $0 \rightarrow (00)$, $1 \rightarrow (01)$, $2\rightarrow(11)$, $3\rightarrow(10)$ and over the quaternionic part acts in the following way~\cite{rio-2013}:
$\one\rightarrow(0,0,0,0)$,    $\bb\rightarrow(0,1,1,0)$, $\ba\rightarrow(0,1,0,1)$,   $\ba\bb\rightarrow(1,1,0,0)$,
$\ba^2\rightarrow(1,1,1,1)$, $\ba^2\bb\rightarrow(1,0,0,1)$,  $\ba^3\rightarrow(1,0,1,0)$,  $\ba^3\bb\rightarrow(0,0,1,1)$.

Note that $\Phi(\be)$ is the all-zeros vector and $\Phi(\bu)$ is the all-ones vector.

Binary codes $C=\Phi(\cC)$ are called $\qq$-codes. In the specific case $k_3=0$, $C$ is called $\Z_2Z_4$-linear code. In this last case, note that $\cC =\Z_2^\gamma\times \Z_4^\delta \subset \Z_2^{k_1}\times \Z_4^{k_2}$. We will say that $\cC$ is of type $2^{\gamma} 4^\delta$ \cite{z4}.

We are interested in Hadamard binary codes $C=\Phi(\cC)$ where $\cC$ is a subgroup of $\cG=\Z_2^{k_1}\times\Z_4^{k_2}\times\cQ_8^{k_3}$ of length $n=2^m$. All through the paper we are assuming it.

The {\em kernel} of a binary code $C$ of length $n$ is $K(C)=\{z \in \Z_2^n : C + z = C \}$.
The dimension of $K(C)$ is denoted by $k(C)$ or simply $k$.
The {\em rank} of a binary code $C$ is the dimension of the linear span of $C$. It is denoted by $r(C)$ or simply $r$.

A \emph{Hadamard matrix} of order $n$ is a matrix of size $n \times n$ with entries $\pm 1$, such that $HH^T=n I$. Any two rows (columns) of a Hadamard matrix agree in precisely $n/2$ components. If $n>2$ then any three rows (columns) agree in precisely $n/4$ components. Thus, if $n>2$ and there is a Hadamard matrix of order $n$ then $n$ is multiple of 4.

Two \emph{Hadamard matrices} are \emph{equivalent} if one can be obtained from the other by permuting rows and/or columns and multiplying rows and/or columns by $-1$. With the last operations we can change the first
row and column of $H$ into $+1$'s and we obtain an equivalent Hadamard matrix which is called normalized. If $+1$'s are replaced by $0$'s and $-1$'s by $1$'s, the initial Hadamard matrix is changed into a (binary) Hadamard matrix and, from now on, we will refer to it when we deal with Hadamard matrices. The binary code consisting of the rows of a (binary) Hadamard matrix and their complements is called a (binary) \emph{Hadamard code}, which is of length $n$, with $2n$ codewords, and minimum distance $n/2$.

Hadamard $\qq$-codes were studied in~\cite{rio-2013} and a classification in five shapes was given.  Set $|T(\cC)|=2^{\sigma}$, $|Z(\cC)/T(\cC)|=2^{\delta}$ and $|\cC/ Z(\cC)|=2^{\rho}$, where $T(\cC)$ is the subgroup of elements of order two, $Z(\cC)$ is the center of $\cC$ and $m = \sigma+\delta+\rho-1$. A normalized generator set in \cite{rio-2013} has the form $\cC=\gen{x_1,\dots,x_{\sigma};$ $ y_1, \dots,y_{\delta};$ $z_1,$ $\dots,z_{\rho}}$, where $x_i$ are elements of order two generating $T(\cC)=\gen{x_1...x_\sigma}$ and $Z(\cC)=\gen{x_1,\dots,x_{\sigma}$; $y_1,\dots,y_{\delta}}$ is the center of $\cC$. In summary, the five shapes found in~\cite{rio-2013} are:
\begin{itemize}\label{shapes}
\item{Shape 1:} $\rho=0$. However, in the current paper we distinguish the case when $\bu$ is not the square of some element of order four and we call these codes of shape $1$, and the case when $\bu$ is the square of some element of order four and we call these codes of shape $1^*$.
\item{Shape 2:} $\delta=0$, $z_1^2=z_2^2=[z_1,z_2]=\bu$, $[z_i,z_j]=z_j^2$ and $[z_j,z_k]=\be$ for every $i\in\{1,2\}$ and $3\le j,k \le \rho$.
\item{Shape 3:} $\delta=0$, $z_1^2=\bu\not\in \gen{z_2^2,\dots,z_{\rho}^2}\cong \Z_2^{\rho-1}$, $[z_1,z_i]=z_i^2$ and $[z_i,z_j]=\be$, for every $i\not=j$ in $\{2,\dots,\rho\}$.
\item{Shape 4:} $\delta \leq 1$ and $z_1^2=z_2^2=[z_1,z_2] \not=\bu$. However, as for shape $1$, in the current paper we distinguish the case when $\bu$ is not the square of some element of order four (this is equivalent to $\delta=0$) and we call these codes of shape $4$, and the case when $\bu$ is the square of some element of order four (this is equivalent to $\delta=1$) and we call these codes of shape $4^*$.
\item{Shape 5:} $\delta=0$, $\rho=4$, $z_1^2=z_2^2=[z_1,z_2]=\bu\ne z_3^2=z_4^2=[z_3,z_4]$ and $[z_i,z_j] \in \langle z_j^2\rangle$ for every $i\in \{1,2\}$ and $j\in\{3,4\}$.
\end{itemize}

Two elements $a$ and $b$ of $\cC$ commutes if and only if $ab=ba$. As an extension of this concept, the {\em commutator} of $a$ and $b$ is defined as the element $[a,b]$ such that $ab=[a,b]ba$.  Note that all commutators belong to $T(\cC)$ and any element of $T(\cC)$ commutes with all elements of $\cC$.

We say that two elements $a$ and $b$ of $\cC$ {\em swap} if and only if $\Phi(ab)=\Phi(a)+\Phi(b)$. As an extension of this concept, define the {\em swapper} of $a$ and $b$ as the element $(a\ccolon b)$ such that $\Phi((a\ccolon b)ab)=\Phi(a)+\Phi(b)$. Note that all swappers belong to $T(\cG)$ but they can be out of $\cC$.

Both, commutators and swappers can be obtained as a component-wise expression, if $a=(a_1,\dots,a_l)$ and $b=(b_1,\dots,b_l)$ then $(a\ccolon b) = ((a_1\ccolon b_1),\dots,(a_l\ccolon b_l))$ and $[a,b] = ([a_1,b_1],\dots,[a_l,b_l])$. Table~\ref{table:1} and Table~\ref{table:2} describes the values of all swappers and commutators, respectively, in $\Z_4$ and $\cQ_8$ (the value in $\Z_2$ is always 0).

\begin{table}[ht]
\begin{center}
\begin{tabular}{c|cc}
& 0,2 & 1,3 \\\hline
0,2 & 0 & 0 \\
1,3 & 0 & 2
\end{tabular}
\begin{tabular}{c|cccc}
& $\one$,$\ba^2$ & $\ba$,$\ba^3$ & $\bb$,$\ba^2\bb$ & $\ba\bb$,$\ba^3\bb$ \\\hline
$\one$,$\ba^2$ & $\one$ & $\one$ & $\one$ & $\one$\\
$\ba$,$\ba^3$  & $\one$ & $\ba^2$ & $\ba^2$ & $\one$\\
$\bb$,$\ba^2\bb$ & $\one$ & $\one$ & $\ba^2$ & $\ba^2$\\
$\ba\bb$,$\ba^3\bb$ & $\one$ & $\ba^2$ & $\one$ & $\ba^2$
\end{tabular}
\end{center}
\caption{Swappers in $\Z_4$ and $\cQ_8$}\label{table:1}
\end{table}

\begin{table}[ht]
\begin{center}
\begin{tabular}{c|cc}
& 0,2 & 1,3 \\\hline
0,2 & 0 & 0 \\
1,3 & 0 & 0
\end{tabular}
\begin{tabular}{c|cccc}
& $\one$,$\ba^2$ & $\ba$,$\ba^3$ & $\bb$,$\ba^2\bb$ & $\ba\bb$,$\ba^3\bb$ \\\hline
$\one$,$\ba^2$ & $\one$ & $\one$ & $\one$ & $\one$\\
$\ba$,$\ba^3$  & $\one$ & $\one$ & $\ba^2$ & $\ba^2$\\
$\bb$,$\ba^2\bb$ & $\one$ & $\ba^2$ & $\one$ & $\ba^2$\\
$\ba\bb$,$\ba^3\bb$ & $\one$ & $\ba^2$ & $\ba^2$ & $\one$
\end{tabular}
\end{center}
\caption{Commutators in $\Z_4$ and $\cQ_8$}\label{table:2}
\end{table}

It is known~\cite{rio-2013} the following relationship between swappers and the kernel. For any element $a$ of $\cC$ we have $\Phi(a) \in K(C)$ if and only if all swappers $(a\ccolon b) \in \cC$, for every $b \in \cC$. Moreover, the linear span of $C$ can be seen as $\Phi(\langle \cC \cup S(\cC)\rangle)$, where $\langle \cC \cup S(\cC)\rangle$ is the group generated by $\cC$ and $S(\cC)$, the set of swappers of the elements in $\cC$.

Using Table-\ref{table:1} and Table-\ref{table:2} it can be easily verified the next lemma.
\begin{lemm}\label{previs}
For any $a,b,c \in \cG$:
\begin{enumerate}
\item $[a,b]=[b,a]$. Note it is not always true that $(a\ccolon b)=(b\ccolon a)$.
\item $(ab\ccolon c)=(a\ccolon c)(b\ccolon c)$ and $(c\ccolon ab)=(c\ccolon a)(c\ccolon b)$
\item $[ab,c]=[a,c][b,c]$.
\item $(a\ccolon b)(b\ccolon a)=[a,b]$
\item $(a\ccolon a)=a^2$
\item if $a^2=\be$ then $[a,b]=(a\ccolon b)=(b\ccolon a)=\be$.
\item if $a^2=\bu \lland [a,b]=\be$ then $(a\ccolon b)=(b\ccolon a)=b^2$.
\end{enumerate}
\end{lemm}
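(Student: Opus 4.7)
The plan is to reduce every part to a componentwise verification on $\cG=\Z_2^{k_1}\times\Z_4^{k_2}\times\cQ_8^{k_3}$, since the paper already defines commutators and swappers coordinate by coordinate and the $\Z_2$-slot contributes nothing. In $\Z_4$ the group is abelian (so commutators vanish) and Table~\ref{table:1} identifies the swapper $(x\ccolon y)$ with twice the product of the parities of $x$ and $y$; this description, together with $2+2=0$ in $\Z_4$, makes each identity immediate in this slot.

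The $\cQ_8$ slot I would handle by passing to the central quotient $\cQ_8/\gen{\ba^2}\cong\Z_2^2$. Both tables show that swappers and commutators lie in $\{\one,\ba^2\}$ and depend only on the classes $\bar x,\bar y\in\Z_2^2$; writing $(x\ccolon y)=\ba^{2s(\bar x,\bar y)}$ and $[x,y]=\ba^{2c(\bar x,\bar y)}$, the tabulated values match the bilinear forms $s(\bar x,\bar y)=x_1y_1+x_1y_2+x_2y_2$ and $c(\bar x,\bar y)=x_1y_2+x_2y_1$. Bilinearity of $s$ and $c$ then yields (2) and (3) coordinate by coordinate.

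Identities (1), (4), and (5) I would derive abstractly. For (1), the relations $ab=[a,b]ba$ and $ba=[b,a]ab$ combine to $[a,b][b,a]=\be$, and since commutators lie in $T(\cG)$ and are self-inverse this forces $[a,b]=[b,a]$. For (5), $\Phi((a\ccolon a)a^2)=2\Phi(a)=\zero$ and the injectivity of $\Phi$ yield $(a\ccolon a)=(a^2)\inv=a^2$ (as $a^2\in T(\cG)$). For (4), the symmetry $\Phi(a)+\Phi(b)=\Phi(b)+\Phi(a)$ gives $(a\ccolon b)ab=(b\ccolon a)ba$; combined with $ba=[b,a]ab$ and the self-inverse property of swappers, this rearranges to $(a\ccolon b)(b\ccolon a)=[b,a]$, which equals $[a,b]$ by (1).

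For (6) the hypothesis $a^2=\be$ places each coordinate of $a$ in the first row and column of both tables, whose entries are all trivial. For (7), $a^2=\bu$ forces every coordinate of $a$ to have order four, while $[a,b]=\be$ means $\bar b_i$ lies in the $c$-orthogonal complement of $\bar a_i$ in each $\cQ_8$-slot; on that complement a short algebraic check shows $s(\bar a_i,\bar b_i)=b_{i,1}+b_{i,2}+b_{i,1}b_{i,2}$, which is exactly the exponent recording $b_i^2$, so $(a\ccolon b)=b^2$, and $(b\ccolon a)=b^2$ follows by (4). The main obstacle is purely bookkeeping, namely this last coordinate-wise check for (7) in the $\cQ_8$-slot; the quotient-by-center viewpoint used for (1)--(3) is what keeps it clean.
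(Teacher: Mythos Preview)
Your proposal is correct and follows the same componentwise-via-tables route as the paper, whose own proof is the single sentence ``Using Table~\ref{table:1} and Table~\ref{table:2} it can be easily verified.'' Your extra layer of structure---encoding the $\cQ_8$ tables as the explicit bilinear forms $s(\bar x,\bar y)=x_1y_1+x_1y_2+x_2y_2$ and $c(\bar x,\bar y)=x_1y_2+x_2y_1$ on $\cQ_8/\gen{\ba^2}$, and deriving (1), (4), (5) abstractly from the definitions and the injectivity of $\Phi$---is a genuine enrichment of the paper's bare table check, not a different method; it makes (2) and (3) into one-line bilinearity consequences and cleanly isolates the only real case analysis (item (7)) to the two-point orthogonal $\{0,\bar a_i\}$.
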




\begin{defi}
For any $x \in T(\cG)$ define $M(x)$ as the set of components where $x$ has, as entry, an element of order two, $\varnothing \subseteq M(x) \subseteq M(\bu)$.
\end{defi}
Example: let $x=(1,a^2,a^2,1,1,a^2)$ then $M(x)=\{1,2,5\}$, where we enumerate the $0$th component as the first one.

The next results in this section are technical lemmas, which prove to be useful later.


\begin{lemm}\label{lem:2}~
Let $x,y \in \cG$, then
\begin{enumerate}
\item $M(\,(x\ccolon y)\,) \subseteq M(x^2) \cap M(y^2)$ and $M([x,y]) \subseteq M(x^2) \cap M(y^2)$. \\In the specific case when $[x,y]=\be$ we have $M((x\ccolon y))=M(x^2) \cap M(y^2)$ and $M([x,y])=\varnothing$.
\item if $[x,y]=\be$ then $\w((xy)^2)=\w(x^2y^2)=\w(x^2)+\w(y^2)-2\w((x\ccolon y))$.
\end{enumerate}
\end{lemm}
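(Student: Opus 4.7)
The plan is to argue componentwise, exploiting the fact that both the swapper and the commutator are defined componentwise. Fix a coordinate $i$; whether $i$ lies in $M(x^2)$, $M(y^2)$, $M((x\ccolon y))$, or $M([x,y])$ is determined entirely by $(x_i, y_i)$ via Tables~\ref{table:1} and~\ref{table:2}. The $\Z_2$ coordinates contribute nothing on either side, so the work reduces to case analysis in $\Z_4$ and in $\cQ_8$.

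For item~(1), a direct inspection of Table~\ref{table:2} shows that $[x_i, y_i] \ne \one$ forces both $x_i$ and $y_i$ to have order four in $\cQ_8$, equivalently $i \in M(x^2) \cap M(y^2)$; and Table~\ref{table:1} shows that $(x_i\ccolon y_i) \ne \one$ likewise forces $x_i^2$ and $y_i^2$ to have order two (both odd in $\Z_4$, or both of order four in $\cQ_8$). This yields the two inclusions. When $[x,y] = \be$, the equality $M([x,y]) = \varnothing$ is immediate; and for the other equality one must check that every coordinate $i \in M(x^2) \cap M(y^2)$ with $[x_i, y_i] = \one$ actually lies in $M((x\ccolon y))$. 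In $\Z_4$ this is automatic since commutators are always trivial and the swapper of two odd entries is $2$. In $\cQ_8$ the condition $[x_i, y_i] = \one$ with both of order four pins $x_i$ and $y_i$ into a common cyclic subgroup of order four (one of $\gen{\ba}$, $\gen{\bb}$, $\gen{\ba\bb}$), and Table~\ref{table:1} then gives $(x_i\ccolon y_i) = \ba^2 \ne \one$ in each case.

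For item~(2), the identity $[x,y]=\be$ gives $xy = yx$, hence $(xy)^2 = x^2 y^2$, and the first equality is immediate. For the second, both $x^2$ and $y^2$ are componentwise either the identity or the unique order-two element of that component, and the Gray map sends an order-two entry to an all-ones block of length $c_i \in \{1,2,4\}$ according to whether the $i$th coordinate is of $\Z_2$, $\Z_4$, or $\cQ_8$ type. Thus $\w(x^2 y^2)$ sums $c_i$ over the symmetric difference $M(x^2) \bigtriangleup M(y^2)$, which by elementary inclusion-exclusion equals $\w(x^2) + \w(y^2) - 2 \sum_{i \in M(x^2) \cap M(y^2)} c_i$. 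Invoking item~(1) to replace that intersection by $M((x\ccolon y))$, and noting that $\w((x\ccolon y)) = \sum_{i \in M((x\ccolon y))} c_i$ by the same weight-counting rule, closes the identity.

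The only obstacle of substance is the short case check in $\cQ_8$ for item~(1), namely that the vanishing of the commutator between two order-four entries forces them into a common cyclic subgroup of order four, from which the nontriviality of the swapper follows. Everything else is a direct table lookup plus a counting argument.
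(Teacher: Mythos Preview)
Your proof is correct and follows essentially the same approach as the paper, which simply states that ``these items follow straightforwardly from Tables~\ref{table:1} and~\ref{table:2}.'' You have carefully unpacked the componentwise table lookup and the inclusion--exclusion weight count that the paper leaves implicit, so your argument is a fully fleshed-out version of what the authors intended.
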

\begin{proof} These items follow straightforwardly from Tables~\ref{table:1} and \ref{table:2}.
\end{proof}

\begin{lemm}\label{lem:r4b_p1}~
Let $\cC$ be a subgroup of $\cQ_8^{k_3}$ such that $\Phi(\cC)$ is a Hadamard code and let $a,b \in T(\cC)$. If $a,b,ab \not \in \{\be,\bu\}$, then $|M(a) \cap M(b)|=|M(a) \cap \overline{M(b)}|=|\overline{M(a)} \cap M(b)|=|\overline{M(a)} \cap \overline{M(b)}|=k_3/4$.
\end{lemm}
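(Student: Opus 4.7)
My plan is to reduce the statement to a straightforward weight-counting argument for the Hadamard code $\Phi(\cC)$. The setup is this: since $\cC\subseteq \cQ_8^{k_3}$ and every element of $T(\cC)$ has order dividing two, each component of $a$ or $b$ must lie in $\{\one,\ba^2\}$, the only elements of order at most two in $\cQ_8$. Because the Gray map sends $\one\mapsto(0,0,0,0)$ and $\ba^2\mapsto(1,1,1,1)$, for any such element $x$ the image $\Phi(x)$ has Hamming weight exactly $4|M(x)|$, and the code has length $n=4k_3$. Componentwise multiplication on $\{\one,\ba^2\}$ behaves like addition modulo $2$, so $M(ab)=M(a)\triangle M(b)$ (equivalently, $\Phi$ restricted to $T(\cG)\cap \cQ_8^{k_3}$ is a homomorphism and $\Phi(ab)=\Phi(a)+\Phi(b)$).

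Next I would invoke the defining property of a Hadamard code: every codeword of length $n$ other than $\zero$ and the all-ones vector has Hamming weight exactly $n/2$. Since $T(\cC)$ is a subgroup, $ab$ lies in $\cC$, and the hypothesis $a,b,ab\notin\{\be,\bu\}$ ensures that none of $\Phi(a),\Phi(b),\Phi(ab)$ equals $\zero=\Phi(\be)$ nor the all-ones vector $\Phi(\bu)$. Hence all three codewords have weight $n/2=2k_3$, which by the weight formula above yields
\[
|M(a)|=|M(b)|=|M(ab)|=k_3/2.
\]

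Finally, substituting into the symmetric-difference identity
\[
|M(ab)|=|M(a)\triangle M(b)|=|M(a)|+|M(b)|-2|M(a)\cap M(b)|
\]
gives $k_3/2=k_3-2|M(a)\cap M(b)|$, so $|M(a)\cap M(b)|=k_3/4$. The remaining three cardinalities then follow by elementary subtraction: $|M(a)\cap\overline{M(b)}|=|M(a)|-|M(a)\cap M(b)|=k_3/4$, $|\overline{M(a)}\cap M(b)|=|M(b)|-|M(a)\cap M(b)|=k_3/4$, and $|\overline{M(a)}\cap\overline{M(b)}|=k_3-|M(a)\cup M(b)|=k_3-3k_3/4=k_3/4$.

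I do not foresee a serious obstacle here; the argument is essentially arithmetic once the Hadamard weight constraint is applied to the three specific codewords $\Phi(a)$, $\Phi(b)$, $\Phi(ab)$. The only points that merit being stated carefully are the weight formula $\w(\Phi(x))=4|M(x)|$ and the correct code length $n=4k_3$, both of which are immediate from the explicit description of the Gray map on $\cQ_8$.
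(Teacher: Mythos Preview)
Your argument is correct and is precisely the ``straightforward'' computation the paper alludes to: translate the Hadamard weight condition $\w(\Phi(a))=\w(\Phi(b))=\w(\Phi(ab))=n/2$ into $|M(a)|=|M(b)|=|M(a)\triangle M(b)|=k_3/2$ via $\w(\Phi(x))=4|M(x)|$, and then solve for the four intersection sizes. There is nothing to add; the paper omits the details entirely.
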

\begin{proof}
Straightforward.
\end{proof}

\begin{lemm}\label{lem:3}~
Let $\cC$ be a subgroup of $\Z_2^{k_1}\times \Z_4^{k_2}\times \cQ_8^{k_3}$ such that $\Phi(\cC)$ is a Hadamard code. Let $a,b,c\in \cC\setminus T(\cC)$.
\begin{enumerate}
\item\label{lem:3a} either $a^2=\bu$ or $[a,b]=[b,a]=\be$ or $[a,b]=[b,a]=a^ 2$.
\item\label{lem:3b} if $a^2 = u$ and $b^2 = c^2 = [b,c] \not \in \{\be,\bu\}$ then $[a,b] = \be$ or $[a,c] = \be$ or $[a,bc] = \be$.
\item\label{lem:3c} if $b^2=c^2=[b,c]$ and $[a,b]=[a,c]=\be$ then $(ab)^2=(ac)^2=\bu$ and $a^2$, $b^2$, $c^2$ are not equal to $\bu$.
\end{enumerate}
\end{lemm}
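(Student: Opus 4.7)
The plan is to prove the three items in the order~(\ref{lem:3a}), (\ref{lem:3c}), (\ref{lem:3b}), so that the latter two can appeal to the first. The recurring tools will be the componentwise values of commutators and swappers from Tables~\ref{table:1}--\ref{table:2}, the identity $(xy)^2=[x,y]\,x^2\,y^2$ (valid because $T(\cC)\subseteq Z(\cC)$), and the Hadamard constraint that every $\Phi(x)$ with $x\in\cC$ has weight in $\{0,n/2,n\}$, so any $t\in T(\cC)\setminus\{\be,\bu\}$ has $\w(\Phi(t))=n/2$. The hardest step will be~(\ref{lem:3a}), whose proof rests on a weight-cancellation identity derived from a careful coordinate-by-coordinate inspection.

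For~(\ref{lem:3a}), I will first observe that coordinatewise $[a,b]_i\in\{\be_i,a_i^2\}$: the commutator is trivial in the $\Z_2$- and $\Z_4$-parts, and in each $\cQ_8$-coordinate it takes the value $\one$ or $\ba^2$, where the value $\ba^2$ forces $a_i$ to have order four (so $a_i^2=\ba^2$). A coordinate-by-coordinate inspection then gives
\begin{equation*}
\w\bigl(\Phi([a,b]\cdot a^2)\bigr)=\w(\Phi(a^2))-\w(\Phi([a,b])),
\end{equation*}
because $[a,b]\cdot a^2$ coincides with $a^2$ outside $M([a,b])$ and cancels to $\be_i$ inside it. Supposing $[a,b]\neq\be$ and $a^2\neq\bu$, one has $\w(\Phi(a^2))=n/2$; moreover $[a,b]=\bu$ is excluded because the inclusion $M([a,b])\subseteq M(a^2)$ from Lemma~\ref{lem:2}(1) would then force $M(a^2)$ to cover every coordinate, whence $a^2=\bu$. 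Hence $\w(\Phi([a,b]))=n/2$ as well, and the displayed identity yields $\w(\Phi([a,b]\cdot a^2))=0$, so $[a,b]=a^2$.

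For~(\ref{lem:3c}), I will apply~(\ref{lem:3a}) to the pair $(ab,c)$. First, $ab\notin T(\cC)$: otherwise $\be=[ab,c]=[a,c][b,c]=b^2$, impossible since $b\notin T(\cC)$. Using $[a,b]=\be$ one has $(ab)^2=a^2b^2$ and $[ab,c]=b^2\neq\be$, so the alternative $[ab,c]=(ab)^2$ from~(\ref{lem:3a}) would give $a^2=\be$ (forbidden); the only remaining option is $(ab)^2=\bu$, i.e., $a^2b^2=\bu$. Symmetrically $(ac)^2=\bu$. If $a^2=\bu$, then $\bu=a^2b^2=\bu\,b^2$ forces $b^2=\be$, contradiction; and $b^2,c^2\neq\bu$ follows similarly.

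For~(\ref{lem:3b}), set $t:=b^2=c^2=[b,c]\notin\{\be,\bu\}$. Since commutators vanish outside the $\cQ_8$-part, $M(t)$ lies entirely there with $|M(t)|=n/8$, and in each $i\in M(t)$ the identities $b_i^2=c_i^2=[b_i,c_i]=\ba^2$ force $b_i,c_i$ to be order-four elements of $\cQ_8$ in distinct cyclic subgroups. The hypothesis $a^2=\bu$ forces $k_1=0$ and $a_i$ of order four in every coordinate. Since $\cQ_8$ has exactly three cyclic order-four subgroups and $b_ic_i$ lies in the third one (distinct from those of $b_i$ and $c_i$), in every $i\in M(t)$ precisely one of $[a_i,b_i],[a_i,c_i],[a_i,b_ic_i]$ equals $\one$ while the other two are $\ba^2$. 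Writing $S_1,S_2,S_3$ for the corresponding supports inside $M(t)$, this gives $|S_1|+|S_2|+|S_3|=2|M(t)|=n/4$, while each $4|S_j|\in\{0,n/2,n\}$ combined with $|S_j|\le|M(t)|=n/8$ yields $|S_j|\in\{0,n/8\}$. The only way to sum to $n/4$ is with exactly one $|S_j|=0$, which identifies the commutator equal to $\be$.
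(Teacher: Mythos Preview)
Your argument is correct, and in places rather nice, but it differs from the paper's proof in two of the three items.

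For item~(\ref{lem:3a}) the paper simply cites \cite[Lemma~IV.6]{rio-2013}; you instead supply a self-contained proof via the weight identity $\w(\Phi([a,b]a^2))=\w(\Phi(a^2))-\w(\Phi([a,b]))$, which is valid because $M([a,b])\subseteq M(a^2)$ and both elements lie in $T(\cG)$.

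For item~(\ref{lem:3c}) the paper argues through swappers: from $M(b^2)=M(c^2)=M((bc)^2)$ and Lemma~\ref{lem:2} it deduces $(a\ccolon b)=(a\ccolon c)=(a\ccolon bc)=\be$, then uses the weight formula $\w(a^2b^2)=\w(a^2)+\w(b^2)$ to force $a^2b^2=\bu$. Your route---applying item~(\ref{lem:3a}) to the pair $(ab,c)$ and eliminating the alternatives $[ab,c]=\be$ and $[ab,c]=(ab)^2$---is shorter and avoids swappers entirely.

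For item~(\ref{lem:3b}) the situation is reversed. The paper's proof is a two-line application of item~(\ref{lem:3a}) with the roles of $a$ and $b$ exchanged: since $b^2\neq\bu$ and $[a,b]\neq\be$, item~(\ref{lem:3a}) (with $b$ playing the role of the first element) forces $[a,b]=b^2$; likewise $[a,c]=c^2$, whence $[a,bc]=[a,b][a,c]=b^2c^2=\be$. Your coordinate-counting argument (partitioning $M(t)$ according to which cyclic subgroup of $\cQ_8$ contains $a_i$, and using the Hadamard weight constraint to conclude exactly one $|S_j|$ vanishes) is correct but considerably longer; you may want to note the shortcut.
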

\begin{proof}
The first item was already proven in~\cite[Lemma IV.6]{rio-2013}.

For the second item we will assume that the first two possibilities of the conclusion are false. Using the first item in this Lemma we have $[a,b]=[a,c]=b^2=c^2$, so $[a,bc]=[a,b][a,c]=b^2c^2=e$. This proves the second item.

For the third item note that $(bc)^2=b^2c^2[b,c]=b^2=c^2$, thus $M(b^2)=M(c^2)=M((bc)^2)$.
Taken into account that $[a,b]=[a,c]=[a,bc]=\be$, by Lemma~\ref{lem:2} we have $M((a\ccolon b))=M((a\ccolon c))=M((a\ccolon bc))$. Hence, $(a\ccolon b)=(a\ccolon c)=(a\ccolon bc)$. Moreover, $(a\ccolon bc)=(a\ccolon b)(a\ccolon c)=(a\ccolon b)^2=\be$ and so $(a\ccolon b)=(a\ccolon c)=\be$. Now, using again Lemma~\ref{lem:2}, $\w(a^2b^2)=\w(a^2)+\w(b^2)-2\w((a\ccolon b))=\w(a^2)+\w(b^2)$. As we are working with elements of a Hadamard code, the weights must be equal to $n$, $n/2$ or $0$. The last possibility has been discarded when we state that they do not belong to $T(\cC)$, and so the only remainder possibility is $\w(a^2)=\w(b^2)=\w(c^2)=n/2$ and $\w(a^2b^2)=n$, proving in this way that $a^2$, $b^2$, $c^2$ are not equal to $\bu$ and $a^2b^2=\bu$. The same argumentation leads to $a^2c^2=\bu$.
\end{proof}

\section{The standard form for the generator set of a Hadamard $\qq$-code}\label{sec:standard}

To know the shape of a given Hadamard $\qq$-code we need to begin with a normalized generator set of $\cC$. Now, in this section we present a new point of view which lead to us to construct a standard generator set which will allow to decide the classification of a given subgroup in a more efficient way.

The next theorem shows that a subgroup $\cC$, such that $\phi(\cC)$ is a Hadamard $\qq$-code, has an abelian maximal subgroup $\cA $ which is normal in $\cC$ and $\cC / \cA$ is an abelian group of order $2^a$, for $a\in \{0,1,2\}$. We begin by a technical lemma.

\begin{lemm}\label{lemm:normal}
Let $\cC$ be a subgroup of $\Z_2^{k_1}\times \Z_4^{k_2}\times \cQ_8^{k_3}$ such that $\phi(\cC)=C$ is a Hadamard $\qq$-code. Let $\cA $ be any subgroup of $\cC$ containing $T(\cC)$, the subgroup of the elements of order two in $\cC$. Then $\cA $ is normal in $\cC$.
\end{lemm}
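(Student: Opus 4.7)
The plan is to reduce normality to a statement about commutators, and then invoke the already-established fact (see the discussion right after Tables~\ref{table:1} and \ref{table:2}) that every commutator in $\cC$ lies in $T(\cC)$.

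First I would unwind the definition of normality. To show $\cA$ is normal in $\cC$ it suffices to prove that $cac^{-1}\in\cA$ for every $c\in\cC$ and every $a\in\cA$. From the definition of the commutator stated in the Preliminaries, namely $xy=[x,y]yx$, one obtains the identity
\begin{equation*}
cac^{-1}=[c,a]\,a,
\end{equation*}
so normality of $\cA$ is equivalent to $[c,a]\in\cA$ for all such $c,a$.

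The main input is then the remark in the paper that, for any two elements of $\cC$, their commutator lies in $T(\cC)$. This can be read off Table~\ref{table:2} component by component: in the $\Z_2$-part commutators are trivially $0$, in the $\Z_4$-part they are $0$, and in the $\cQ_8$-part they are either $\one$ or $\ba^2$, i.e.\ elements of order dividing two. Hence $[c,a]$ is an element of $\cC$ of order at most two, so $[c,a]\in T(\cC)$. Since $T(\cC)\subseteq\cA$ by hypothesis, we conclude $[c,a]\in\cA$, and therefore $cac^{-1}=[c,a]\,a\in\cA$.

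There is really no hard step here; the only thing to be careful about is confirming that the commutator $[c,a]$ actually sits inside $\cC$ (so that one may speak of $T(\cC)$ rather than $T(\cG)$), which is immediate from $\cC$ being a subgroup. The Hadamard hypothesis on $\Phi(\cC)$ is not needed for this particular lemma; it is the ambient algebraic structure of $\cG=\Z_2^{k_1}\times\Z_4^{k_2}\times\cQ_8^{k_3}$, together with $T(\cC)\subseteq\cA$, that does all the work.
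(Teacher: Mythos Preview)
Your argument is correct and is essentially the same as the paper's: both reduce normality to the identity $cac^{-1}=[c,a]\,a$ (the paper writes $c^{-1}ac=a[a,c]$, which is the same up to replacing $c$ by $c^{-1}$ and using centrality of commutators) and then invoke the fact that all commutators lie in $T(\cC)\subseteq\cA$. Your additional remark that the Hadamard hypothesis plays no role here is accurate and worth noting.
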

\begin{proof}
We want to show that $c^{-1}ac \in \cA$ for every $a \in \cA, c \in \cC$. We have $c^{-1}ac = a[a,c]$ and all commutators belong to $T(\cC)\subseteq \cA$, so the statement follows.
\end{proof}
\begin{theo}\label{main}
Let $\cC$ be a subgroup of $\Z_2^{k_1}\times \Z_4^{k_2}\times \cQ_8^{k_3}$ such that $\phi(\cC)=C$ is a Hadamard $\qq$-code. Then $\cC$ has an abelian maximal subgroup $\cA $ which is normal in $\cC$ and $|\,\cC / \cA \,|\in\{1,2,4\}$. Futher, $\cC$ may be expressed as a quotient of a semidirect product of $\cA$.
\end{theo}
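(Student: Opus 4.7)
The plan is to exploit the classification of $\cC$ into five shapes together with the observation that $\cC$ has nilpotency class at most two: since every commutator lies in $T(\cC)$ and every element of $T(\cC)$ is central, $\cC' \subseteq T(\cC) \subseteq Z(\cC)$. A direct consequence is that, for any pairwise commuting elements $w_1,\dots,w_k \in \cC$, the subgroup $\gen{Z(\cC), w_1,\dots,w_k}$ is automatically abelian. Combined with Lemma~\ref{lemm:normal}, which guarantees normality of any subgroup containing $T(\cC)$, the first assertion reduces to exhibiting, for each shape, a family of pairwise commuting representatives whose image in $\cC/Z(\cC) \cong \Z_2^{\rho}$ has codimension at most $2$.

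I would then run case by case on the normalized generators $z_1,\dots,z_{\rho}$. Shapes~$1$ and~$1^*$ are trivial since $\rho = 0$ forces $\cC = Z(\cC)$, so $\cA = \cC$. For Shape~$2$, take $\cA = \gen{Z(\cC), z_3,\dots,z_{\rho}}$, which is abelian because $[z_j,z_k] = \be$ for $j,k \ge 3$; since $z_1^2 = z_2^2 = [z_1,z_2] = \bu \in \cA$, the quotient has order $4$. Shape~$3$ is handled analogously with $\cA = \gen{Z(\cC), z_2,\dots,z_{\rho}}$, giving index~$2$, and Shapes~$4$ and~$4^*$ with $\cA = \gen{Z(\cC), z_1}$, which is automatically abelian and has index~$2$ because $z_2^2 = z_1^2 \in \cA$.

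The only non-routine case is Shape~$5$, where $\rho = 4$. Here I view the commutator as an alternating bilinear form $\omega$ on $\cC/Z(\cC) \cong \Z_2^4$ with values in $\cC' = \gen{\bu, v}$ for $v = z_3^2$, and decompose it as $\omega = \omega_u + \omega_v$ via the direct-sum splitting $\cC' \cong \Z_2 \oplus \Z_2$. Since only $\omega_u(\bar z_1, \bar z_2)$ is nonzero, $\omega_u$ has rank $2$ and admits $W = \gen{\bar z_1 \bar z_2, \bar z_3, \bar z_4}$ as a $3$-dimensional totally isotropic subspace; on $W$, the restriction of $\omega_v$ is an alternating form on a $3$-dimensional $\Z_2$-space, hence of rank at most $2$, and therefore admits a $2$-dimensional totally isotropic subspace $W_0$. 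Lifting a basis of $W_0$ to $\cC$ yields two commuting elements $w_1, w_2$ independent modulo $Z(\cC)$, so $\cA = \gen{Z(\cC), w_1, w_2}$ is abelian of index~$4$. This dimension count is the step I expect to require the most care; it relies crucially on the Shape~$5$ hypothesis that $[z_i, z_j] \in \gen{z_j^2}$ for $i \in \{1,2\},\, j \in \{3,4\}$, which keeps the values of $\omega$ two-dimensional.

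For the semidirect-product realization, since $\cA$ is abelian and normal, conjugation gives a well-defined homomorphism $H \to \Aut(\cA)$ for $H = \cC/\cA$. I would fix lifts $g_1,\dots,g_a \in \cC$ (with $a \le 2$) of a generating set of $H$ and take $F$ to be the free group on abstract symbols $\tilde g_1,\dots,\tilde g_a$ acting on $\cA$ by $\tilde g_i \mapsto (\text{conjugation by } g_i)$; then form the semidirect product $\cA \rtimes F$ and define the evaluation $(a, w(\tilde g_i)) \mapsto a \cdot w(g_i)$, which is a well-defined surjective homomorphism onto $\cC$ by freeness of $F$. Its kernel records the $2$-cocycle obstruction to the extension $\cA \hookrightarrow \cC \twoheadrightarrow H$ being split, exhibiting $\cC$ as a quotient of a semidirect product of $\cA$ and preparing the ground for the refinement in Theorem~\ref{prop:z2z4}.
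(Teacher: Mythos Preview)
Your argument is correct, but it departs from the paper's in two places worth noting. For Shape~5 the paper does not invoke alternating forms; instead it applies Lemma~\ref{lem:3}(\ref{lem:3b}) to pick, for each $z\in\{z_1,z_2\}$, an element $f(z)\in\{z_3,z_4,z_3z_4\}$ commuting with $z$, and then checks directly that $r_1=z_1$, $r_2=f(z_1)$ commute while $s_1=z_2$, $s_2=f(z_2)$ generate the index-$4$ quotient. Your bilinear-form argument is cleaner and avoids that lemma, but it only produces \emph{some} isotropic pair $w_1,w_2$, not the specific standardized generators the paper needs for Corollary~3.3 and everything in Sections~\ref{sec:kernelrank}--\ref{sec:construction}. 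Likewise, for the semidirect-product clause the paper works with concrete finite acting groups ($\Z_4$ or $\Z_4\times\Z_4$) and names the single relation killed in each shape, whereas your free-group construction proves the literal statement but gives no explicit presentation.

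One point to tighten: in Shape~2 your $\cA=\gen{Z(\cC),z_3,\dots,z_\rho}$ has index $4$ but is \emph{not} maximal abelian, since $[z_1z_2,z_j]=[z_1,z_j][z_2,z_j]=z_j^4=\be$ for $j\ge 3$, so $z_1z_2$ commutes with all of $\cA$. The paper includes $r_1=z_1z_2$ in $\cA$ and gets index $2$. Your reduction (``codimension at most $2$, then extend to maximal'') covers this, but you should say explicitly that any abelian subgroup containing $Z(\cC)$ extends to a maximal abelian subgroup that still contains $Z(\cC)\supseteq T(\cC)$ (hence is normal by Lemma~\ref{lemm:normal}) and still has index dividing $4$.
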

\begin{proof}
A normalized generator set in \cite{rio-2013} has the form $\cC=\gen{x_1,\dots,x_{\sigma};$ $ y_1, \dots,y_{\delta};$ $z_1,$ $\dots,z_{\rho}}$, where $x_i$ are elements of order two that generates $T(\cC)=\gen{x_1...x_\sigma}$ and $Z(\cC)=\gen{x_1,\dots,x_{\sigma}$; $y_1,\dots,y_{\delta}}$ is the center of $\cC$.
Throughout this proof we will use a new generator set for $\cC$, which will be called \textit{standardized generator set}: $\cC=\gen{x_1,\dots,x_{\sigma}, r_1, \ldots, r_\tau, s_1,\ldots, s_\upsilon}$ and we always define the subgroup $\cA=\langle  x_1,\dots,x_{\sigma}, r_1, \ldots, r_\tau \rangle$, which is normal in $\cC$ by Lemma~\ref{lemm:normal}.

For the case when $\cC$ is of shape 1 or shape $1^*$ we have that the whole group $\cC$ is abelian, so $\cA=\cC$ and $|\,\cC / \cA\,|=1$.

For the case when $\cC$ is of shape 2 we have \cite{rio-2013} $\delta=0$, $z_1^2=z_2^2=[z_1,z_2]=\bu$, $[z_i,z_j]=z_j^2$ and $[z_j,z_k]=\be$ for every $i\in\{1,2\}$ and $3\le j,k \le \rho$.
We define the standardized generator set taking $x_1,\dots,x_{\sigma}$; $r_1 = z_1z_2$, $r_i=z_{i+1}$ for every $2 \le i \le \tau$; $s_1 = z_1$. Now we want to show that $\cA$ is abelian and maximal in $\cC$ and $\cC / \cA = \langle s_1 \rangle$.
Indeed, for every $2 \le i,j \le \tau$, $[r_1,r_i]=[z_1z_2,z_{i+1}]=[z_1,z_{i+1}][z_2,z_{i+1}]=z_{i+1}^2z_{i+1}^2=\be$ and  $[r_i,r_j]=[z_{i+1},z_{j+1}]=\be$. Hence $\cA$ is abelian.
To prove the maximality of $\cA$ in $\cC$ we show that $[s_1,r_1]=[z_1,z_1z_2]=[z_1,z_2]=\bu \neq \be$.
In addition, we see that $r_1^2=(z_1z_2)^2=z_1^2z_2^2[z_1,z_2]=\bu$ and $s_1^2=z_1^2=\bu$ and so $\cC=\cA \rtimes \langle s_1\rangle \big/ (\bu,s_1^2)$, with $r_1^2=\bu$.

For the case when $\cC$ is of shape 3 we have \cite{rio-2013} $\delta=0$, $z_1^2=\bu\not\in \gen{z_2^2,\dots,z_{\rho}^2}$, $[z_1,z_i]=z_i^2$ and $[z_i,z_j]=\be$, for every $i\not=j$ in $\{2,\dots,\rho\}$. We define the standardized generator set taking $r_i=z_{i+1}$ for every $1 \le i \le \tau=\rho-1$; $s_1 = z_1$.  Now we want to show that $\cA$ is abelian and maximal in $\cC$ and $\cC / \cA = \langle s_1 \rangle$.
Indeed, for every $1 \le i,j \le \tau$, $[r_i,r_j]=[z_{i+1},z_{j+1}]=\be$. Hence $\cA$ is abelian.
To prove the maximality of $\cA$ in $\cC$ we show that $[s_1,r_1]=[z_1,z_2]=z_2^2 \neq \be$.
In addition, we note that $\bu \not \in \gen{r_1^2 \dots r_\tau^2}$ and $s_1^2=z_1^2=\bu$ and so $\cC=\cA \rtimes \langle s_1\rangle \big/ (\bu,s_1^2)$, with $r_1^2\not=\bu$.

For the case when $\cC$ is of shape 4 with $\delta=0$ we have $\delta = 0$, $\rho=2$ and $z_1^2=z_2^2=[z_1,z_2] \not \in \{\be,\bu\}$. We define the standardized generator set taking $r_1=z_1$, $s_1=z_2$ and define $\cA = \gen{x_1,\dots,x_{\sigma}; r_1}$.
In this case, $r_1\not=\bu$ and $\upsilon=1$.
As all generators, except one, belong to $T(\cC)$ it is immediate that $\cA$ is abelian and $\cC / \cA = \gen{s_1}$.
For the maximality, see that $[r_1,s_1]=[z_1,z_2]=z_1^2 \neq \be$.
Note $r_1^2=s_1^2 \neq \bu$ and $\cC=\cA \rtimes \langle s_1\rangle \big/ (r_1^2,s_1^2)$.

For the case when $\cC$ is of shape $4^*$ we have $\rho=2$ and $z_1^2=z_2^2=[z_1,z_2] \not \in \{\be,\bu\}$. The element $y_1$ commutes with both $z_1, z_2$ and so, by item 3 of
Lemma \ref{lem:3} we have $y_1^2\neq\bu$ and $(y_1z_1)^2=(y_1z_2)^2=\bu$.
We define the standardized generator set taking $r_1 = y_1z_1$, $r_2=z_1$, $s_1=z_2$.
In this case, $r_1= \bu$ and $\upsilon=1$.
We have $[r_1,r_2]=[y_1z_1,z_1]=\be^2=\be$ and so $\cA$ is abelian. For the maximality, see that $[r_1,s_1]=[y_1z_1,z_2]=[z_1,z_2]=z_1^2 \neq \be$.
In addition, $r_1^2=(y_1z_1)^2=\bu \neq r_2^2=z_1^2$ and $s_1^2=z_2^2\neq \bu$ and $\cC=\cA \rtimes \langle s_1\rangle \big/ (r_1^2,s_1^2)$.

For the case when $\cC$ is of shape 5 we have $\delta=0$ and $\rho=4$. We have: $z_1^2=z_2^2=[z_1,z_2]=\bu\ne z_3^2=z_4^2=[z_3,z_4]$ and $[z_i,z_j] \in \langle z_j^2\rangle$ for every $i\in \{1,2\}$ and $j\in\{3,4\}$. We define the standardized generator set taking $r_1 = z_1$, $r_2=f(z_1)$, $s_1=z_2$, $s_2=f(z_2)$, where: \\
$f(z)=\left\{\begin{matrix}
z_3 \: \mbox{ if }\: [z,z_3]=e,\\
z_4 \: \mbox{ if } \: [z,z_4]=e,\\
z_3z_4 \: \mbox{ otherwise.} \\
\end{matrix}\right.$ \\
From Lemma~\ref{lem:3} it is easy to check that in the following matrix
$$
\left(\begin{array}{ccc}
{ }[z_1,z_3] & [z_1,z_4] & [z_1,z_3z_4] \\
{ }[z_2,z_3] & [z_2,z_4] & [z_2,z_3z_4]\\
{ }[z_1z_2,z_3] & [z_1z_2,z_4] & [z_1z_2,z_3z_4]\\
\end{array}\right)
$$
there is one and only one element in each row or column equal to $\be$, being the other two elements equals to $z_3^2=z_4^2$. Therefore, $[z_1,f(z_1)]=[z_2,f(z_2)]=\be$ and $[z_1,f(z_2)]=[z_2,f(z_1)]=[f(z_1),f(z_2)]=z_3^2=z_4^2$.

We have $\cA=\langle r_1,r_2 \rangle$ and $\cC / \cA =\langle s_1,s_2\rangle$. In particular $[r_1,r_2]=[z_1,f(z_1)]=\be$, hence $\cA$ is abelian.
For the maximality, see that $[r_1,s_1] = [z_1,z_2] \neq \be$ and $[r_2,s_2]=[f(z_1),f(z_2)] \neq \be$.
In addition, note $r_1^2=s_1^2=\bu\neq r_2^2=s_2^2$ and $\cC=\cA\rtimes \langle s_1, s_2\rangle\big/ (r_1^2,s_1^2)(r_2^2,s_2^2)$.
\end{proof}

The next corollary summarize the most relevant properties of the standardized set of generators we just defined.

\begin{coro}
Let $\cC$ be a subgroup of $\Z_2^{k_1}\times \Z_4^{k_2}\times \cQ_8^{k_3}$ such that $C=\Phi(\cC)$ is a Hadamard code and let \{\genset\} be a standard set of generators of $\cC$.
\begin{itemize}
\item The elements $x_i$ are of order two and generate $T(\cC)$.
\item The elements $r_i$ are of order four and commute with each other, $[r_i,r_j]=\be$ for every $1 \le i,j \le \tau$. When $\bu \in \gen{r_1 \dots r_\tau}$ we will take $\bu=r_1^2$ and we have $r_1^2=\bu \not \in \gen{r_2^2...r_\tau^2}$.
\item The cardinal $\upsilon$ of the set $\{s_1,s_\upsilon\}$ is in $\{0,1,2\}$ and when $\upsilon=2$ we have $s_1^2=\bu \neq s_2^2$, and $[s_1, s_2]=\be$. Moreover, when $r_1^2=s_1^2=\bu$ then $[r_1,s_1]=\bu$.
\item Any element $c\in \cC$ can be written in a unique way as
$$
c=\prod_{i=1}^{\sigma}x_i^{a_i}\prod_{j=1}^{\tau}r_j^{b_j}
\prod_{k=1}^{\upsilon}s_{k}^{c_k}, \,\,\,\mbox{where $a_i,b_j,c_k\in \{0,1\}$}.
$$
\end{itemize}
\end{coro}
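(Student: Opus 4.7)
The plan is to go through the corollary's items one by one, reading off the relevant information from the construction carried out case by case inside the proof of Theorem~\ref{main}. Most of what is claimed has actually been verified implicitly in that proof; the corollary is basically a convenient summary, so the proposal is to make that summary into explicit verifications and then handle uniqueness of the representation by a counting argument at the end.

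First I would address item 1. Since the standardized generator set is defined on top of the normalized one from \cite{rio-2013}, the generators $x_1,\dots,x_\sigma$ are exactly those of $T(\cC)$, so the claim is immediate. For item 2, I would revisit each of the seven shapes (1, $1^*$, 2, 3, 4, $4^*$, 5) and check: (a) each $r_j$ is of order four, and (b) $[r_i,r_j]=\be$. In shapes 1, $1^*$ this is trivial because $\cC$ is abelian and the $r_j$ are the $y_i$. In shape~2 the $r_j$ come from $z_1z_2$ and $z_{i+1}$ ($i\ge 2$), and the commutators were already computed to be $\be$ in the main proof. In shape~3 the $r_j = z_{j+1}$ commute pairwise by hypothesis. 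In shapes 4, $4^*$ there is only one $r_j$, or $[r_1,r_2]$ was directly checked to vanish. In shape~5 the commutation $[r_1,r_2]=[z_1,f(z_1)]=\be$ was verified. The condition about $\bu$ is exactly the dichotomy between shape~2 (where $r_1^2=\bu$, and we designated it so) and shape~3 (where $r_1^2\ne \bu$ by construction), together with the choice in shape $4^*$ where $r_1=y_1z_1$ was specifically picked so that $r_1^2=\bu$.

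For item 3, the possible values $\upsilon\in\{0,1,2\}$ follow from the case distinction: shapes 1 and $1^*$ give $\upsilon=0$; shapes 2, 3, 4, $4^*$ give $\upsilon=1$; shape~5 gives $\upsilon=2$. The only nontrivial statement is in shape~5: there we must check $s_1^2=\bu$, $s_2^2\ne\bu$, $[s_1,s_2]=\be$, and when $r_1^2=s_1^2=\bu$ that $[r_1,s_1]=\bu$. The first three follow from $s_1=z_2$, $s_2=f(z_2)$ and the computation $[z_2,f(z_2)]=\be$ done in the main proof, together with $z_2^2=\bu$ and $f(z_2)^2=z_3^2\ne\bu$. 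The identity $[r_1,s_1]=[z_1,z_2]=\bu$ is the defining relation of shape~5 (and reappears identically in shape~2).

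The main obstacle is item 4, the \emph{unique} expression of every $c\in\cC$ as a product $\prod x_i^{a_i}\prod r_j^{b_j}\prod s_k^{c_k}$ with $a_i,b_j,c_k\in\{0,1\}$. My plan is to argue by cardinality. By Theorem~\ref{main}, $\cA=\langle x_1,\dots,x_\sigma,r_1,\dots,r_\tau\rangle$ is abelian and normal in $\cC$, so every $c\in\cC$ can be written as $c = a\cdot \prod_k s_k^{c_k}$ with $a\in\cA$ and $c_k\in\{0,1\}$ (using $s_k^2\in\cA$ and $[s_1,s_2]=\be$ when $\upsilon=2$). Within $\cA$, the elements of order two $x_i$ generate $T(\cC)$ and the elements of order four $r_j$ satisfy $r_j^2\in T(\cC)$, so every $a\in\cA$ has the form $\prod x_i^{a_i}\prod r_j^{b_j}$. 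This gives existence. For uniqueness it is enough to count: the number of formal products is at most $2^{\sigma+\tau+\upsilon}$, and since $m=\sigma+\delta+\rho-1$ with $\tau+\upsilon=\delta+\rho$ in every shape (direct verification case by case), we get $2^{\sigma+\tau+\upsilon}=2^{m+1}=|\cC|$. Hence the map from exponent tuples to $\cC$ is a bijection, which is exactly the uniqueness statement. Once this is in place, the corollary is proved.
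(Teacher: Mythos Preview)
Your proposal is correct and matches the paper's intent: the corollary is stated there without proof, as a summary of what was established shape by shape inside the proof of Theorem~\ref{main}, so the only sensible argument is precisely the case-by-case reading you outline. Your addition of the counting argument for item~4 (checking $\tau+\upsilon=\delta+\rho$ in each shape so that $2^{\sigma+\tau+\upsilon}=|\cC|$) is the natural way to turn the existence of the product form into uniqueness, and is the one piece of content not already spelled out in the paper.
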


There are a few more facts that we want to emphasize and that we use later in the next section about constructions. First of all, if there exists $x\in \cC$ such that $x^2=\bu$ then $k_1=0$. This is the reason why the shapes 2, 3, $4^*$ and shape 5 must have $k_1=0$. Secondly, if $x,y\in \cC$ and $[x,y]=z\not=\be$ then the components in $M(z)$ does not correspond to $\Z_4$. This is the reason why the shape 2 and 5 (where $r_1^2=s_1^2=[r_1,s_1]=\bu$) have $k_2=0$.

For shape 4  we have that $x^2\not=\bu$, for all $x\in \cC$. Hence, since $[r_1,s_1]=r_1^2=s_1^2\not=\bu$, we have that all $\Z_4$-components in $r_1$ and in $s_1$ are 0 or 2. The other generator elements in $\cC$ (apart from $r_1$ and $s_1$) are of order two. We claim that in this case $k_2=0$. Otherwise, after the Gray map, the corresponding Hadamard matrix would have repeated columns, those corresponding the two binary components of each $\Z_4$, and this contradicts that we had a Hadamard matrix. We can go further, for the same reason it can not exist any $Q_8$-component of order two in $r_1$ or $s_1$ and, since $[r_1,s_1]$ has weight $4k_3$ which must be $n/2$. We conclude that $k_1=4k_3$.

Finally, for shape 3 and shape $4^*$, the elements $x\in \cC$ such that $x^2=\bu$ should have their $\Z_4$-components belonging to $\{1,3\}$. The rest of elements $x$, so with $x^2\not=\bu$, have their $\Z_4$-components belonging to $\{0,2\}$  since, for any $i\in \{1,\ldots,\tau\}$, $[r_i,s_1]=r_i^2\not=\bu$ for shape 3 and $[r_i,s_1]=s_1^2\not=\bu$ for shape $4^*$.  The same argumentation as in the previous paragraph lead us to say that $k_2=2k_3$ for shape $4^*$.

The next theorem characterizes the maximal abelian subgroup $\cA$ and makes possible all constructions of these kind of Hadamard $\qq$-codes.
\begin{theo}\label{prop:z2z4}
Let $\cC$ be a subgroup of $\Z_2^{k_1}\times \Z_4^{k_2}\times \cQ_8^{k_3}$ such that $\phi(\cC)=C$ is a Hadamard $\qq$-code and $\cA $ the abelian maximal subgroup in $\cC$. Then $\phi(\cA)$ can be described as a duplication of a Hadamard $\Z_2\Z_4$-linear code when $\upsilon=1$ or as a quadruplication of a Hadamard $\Z_4$-linear code, if $\upsilon=2$.
\end{theo}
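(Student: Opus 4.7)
The plan is to construct, in each case ($\upsilon=1$ and $\upsilon=2$), an explicit subgroup $\cA'$ of $\Z_2^{k_1'}\times\Z_4^{k_2'}$ (or of $\Z_4^{k_2'}$ when $\upsilon=2$) together with a coordinate permutation of $\Z_2^n$ that identifies $\Phi(\cA)$ with the literal duplication (resp.\ quadruplication) of $\Phi(\cA')$. The central ingredient is that the abelianness of $\cA$ forces its restriction to every $\cQ_8$ coordinate to lie in an abelian, hence cyclic, subgroup of $\cQ_8$ of order at most four. A direct enumeration over the five abelian subgroups $\{\one\}$, $\{\one,\ba^2\}$, $\gen{\ba}$, $\gen{\bb}$, $\gen{\ba\bb}$ shows that for each one, after an internal permutation of the four Gray columns at that coordinate, the restricted image becomes a literal duplication of either the 2-bit Gray image of a $\Z_4$-coordinate (when the subgroup has order four) or the 1-bit image of a $\Z_2$-coordinate (when its order is at most two). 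Performing these permutations simultaneously at every $\cQ_8$ position reduces the $\cQ_8$-block of $\Phi(\cA)$ to a duplication.

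For the $\Z_2$ and $\Z_4$ parts I would use the shape-specific constraints derived between Theorems~\ref{main} and~\ref{prop:z2z4}. For shape 2 both $k_1=0$ and $k_2=0$, so there is nothing else to handle. For shape 3, the fact that no element of $\cA$ has square equal to $\bu$ (since every generator $r_i$ satisfies $r_i^2\neq\bu$ and $\bu\notin\gen{r_1^2,\dots,r_\tau^2}$) forces every $\Z_4$-entry of $\cA$ to lie in $\{0,2\}$, so the two Gray bits of each $\Z_4$ coordinate agree on $\cA$ and pair trivially. For shape 4, the derived equality $k_1=4k_3$, together with the fact that the commutator $[r_1,s_1]$ has zero $\Z_2$-weight and full $\cQ_8$-weight, forces the $4k_3$ columns of the $\Z_2$-block of the generator matrix of $\cA$ to split into $2k_3$ pairs of identical columns. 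For shape $4^*$, the derived equality $k_2=2k_3$ together with $r_1|_{\Z_4}\in\{1,3\}^{k_2}$ pairs the $\Z_4$-coordinates in complementary fashion so that, after a suitable column permutation, the appropriate Gray bits of the paired coordinates agree on all of $\cA$. Combining these pairings with the $\cQ_8$-pairing of the first paragraph produces a coordinate permutation for which $\Phi(\cA)$ is a literal duplication of a length-$n/2$ code having $|\cA|=n$ codewords and minimum distance $n/4$, i.e.\ a Hadamard $\Z_2\Z_4$-linear code $\Phi(\cA')$.

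For $\upsilon=2$ (shape 5) the same kind of analysis is carried out with the standardized generators $x_i,r_1,r_2$, where $r_1^2=\bu$ and $r_2^2=z_3^2\neq\bu$. The requirement that $z_3^2$ has weight $n/2$ forces half of the $k_3$ positions of $\cQ_8$ to carry an $\ba^2$ in $z_3^2$ and the other half to carry $\one$; exploiting the further constraints $[r_1,s_2]=[r_2,s_1]=[r_2,s_2]=z_3^2$ one partitions the $k_3$ $\cQ_8$ positions into $k_3/2$ matching pairs on which the restrictions of all generators of $\cA$ agree up to an internal column permutation, yielding four equal binary bits per pair. Together with the internal pairing at each coordinate this gives a quadruplication of a Hadamard $\Z_4$-linear code of binary length $n/4$. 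The main obstacle will be establishing the column pairings for the $\Z_2$/$\Z_4$ parts in shapes~4 and~$4^*$ and the coordinate pairings for shape~5: these do not follow from the abelianness of $\cA$ alone but rely on the precise weight calculations of the commutators $[r_i,s_j]$ and on the Hadamard constraint that every non-trivial element of $T(\cC)$ has weight $n/2$; once these pairings are in place, verifying that dimensions and distances line up is routine.
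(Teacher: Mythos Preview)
Your local observation on the $\cQ_8$-block is correct and elegant: because $\cA$ is abelian, its projection to each $\cQ_8$ coordinate lands in a cyclic subgroup, and for each of the five possible cyclic subgroups the four Gray bits visibly split into two equal pairs after a column permutation. This alone handles shapes~2 and~3 (where the $\Z_2$/$\Z_4$ blocks are absent or already trivially doubled on $\cA$), and it is a genuinely different mechanism from the one the paper uses.

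The gap is precisely where you flag it. For shapes~4, $4^*$ and~5 the proposed pairings on the $\Z_2$-, $\Z_4$- and residual $\cQ_8$-blocks are not consequences of the commutator/swapper weight identities you invoke. Knowing that $[r_1,s_1]$ vanishes on the $\Z_2$-block says nothing about whether the $4k_3$ binary columns coming from $x_1,\dots,x_\sigma,r_1$ pair up; those columns are determined by the binary halves of the order-two generators, which the shape-$4$ relations do not constrain. Likewise $r_1|_{\Z_4}\in\{1,3\}^{k_2}$ does not by itself force a pairing of the $\Z_4$ coordinates in shape~$4^*$, and the relations $[r_i,s_j]=z_3^2$ in shape~5 control supports of squares, not the full column pattern of the $x_i$. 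What you are missing is \emph{global column} information about $\Phi(\cA)$, not weight information about individual codewords.

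The paper supplies exactly this with a single shape-independent argument. Let $N$ be the matrix whose rows are all elements of $\Phi(\cA)$ and $\overline{N}$ its extension by the remaining elements of $\Phi(\cC)$. If three binary columns of $N$ coincided, the corresponding three column-extensions in $\overline{N}$ would have to be pairwise complementary (any two columns of a Hadamard matrix agree in exactly $n/2$ positions), which is impossible; hence every binary column of $N$ has multiplicity at most~$2$. Combined with the known upper bound from~\cite{prv} on the number of distinct $\Z_2\Z_4$-columns available to a generator matrix of the given type, this forces each column to occur exactly twice, yielding the duplication directly. For $\upsilon=2$ the analogous ``any three columns agree in $n/4$ positions'' property of Hadamard matrices rules out five equal binary columns in $N$; since here $k_1=k_2=0$ and each $\cQ_8$ coordinate already contributes a pair of equal binary columns on $\cA$, the multiplicity is exactly four. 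This single column-counting device replaces all the shape-by-shape pairings you sketched, and it is what your argument would ultimately have to import to close the gaps.
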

\begin{proof}
Let $\cA=\langle  x_1,\dots,x_{\sigma}, r_1, \ldots, r_\tau \rangle$ and $\cC=\langle \cA,s_1,\ldots,s_\upsilon \rangle$.
We know from Theorem~\ref{main} that $|\cC /\cA| \in \{1,2,4\}$.
If $|\cC /\cA| =1$ then there is nothing to prove, $\phi(\cA)$ is a Hadamard $\Z_2\Z_4$-linear code.

Let us assume that $\upsilon=1$, so $|\cC/\cA|=2$.
Code $\cA$ is additive and has length $2^m=2^{\sigma+\tau+\upsilon-1}$ .
Let $M$ be the matrix with the rows given by $x_1,\dots,x_{\sigma}, r_1, \ldots, r_\tau$. Matrix $M$ has $k_1$ binary columns, $k_2$ quaternary columns and $k_3$ quaternionic columns and is a generator matrix for $\cA$.
Let $\overline{M}$ the matrix $M$ extended with one more row given by $s_1$.
Also, let $N$ be the matrix where the rows are all elements in $\cA$ and $\overline{N}$ the matrix $N$ extended adding the remainder rows of $\cC$.
Columns in $N$ could be considered as binary columns after a Gray map of the original elements. First of all we claim that there are not three repeated binary columns in $M$.
Deny the claim. Let $a,b,c$ be the repeated binary columns in $N$ and $a', b', c'$ the corresponding extension to the second part of matrix $\overline{N}$. Since $\phi(\cC)$ is a Hadamard code any two columns of $\overline{N}$ agree in precisely half of components.
Hence, we should have $b'^{(c)}=a'$, where $b'^{(c)}$ means the complementary of $b'$. Also we should have $c'^{(c)}=a'$ and $c'^{(c)}=b'$ obtaining $b'^{(c)}=b'$ which could not happen. So, in $N$ there are not three or more repeated binary columns.
It is known that the parity check matrix $M$ of and additive code $\cA=\langle  x_1,\dots,x_{\sigma}, r_1, \ldots, r_\tau \rangle$ has at most $k_1+k_2$ different (up to sign) $\Z_2\Z_4$-columns~\cite{prv}, where $k_1+2k_2=2^{\sigma+\tau-1}$, and this maximum corresponds to a Hadamard code.
Therefore, since matrix $M$ has not three repeated binary columns and has length $2(k_1+2k_2)$ we conclude that $M$ has exactly $k_1+k_2$ different $\Z_2\Z_4-$columns (up to sign) each one repeated twice and $\cA$ is a duplicated additive Hadamard code.

Finally, let us assume $\upsilon=2$, so $|\cC/\cA|=4$. In this case we known that $k_1=k_2=0$ (Table~\ref{table:ex}). Matrix $M$ has repeated binary columns. Indeed, each element is in $\langle \ba \rangle \subset Q_8$ and so, each binary column is repeated twice.
We claim that we can not have five repeated binary columns, so three $Q_8$-columns.
Deny the claim. Let $a_1,a_2,a_3,a_4,a_5$ the five repeated binary columns in $N$ and $a'_1,a'_2,a'_3,a'_4,a'_5$ the corresponding extension to the second part of matrix $\overline{N}$. We know that in a Hadamard matrix, any three columns agree in precisely a fourth part of the components. Since any three different columns $a_{i_1}, a_{i_2}, a_{i_3}$ coincide we have that any three different columns $a'_{i_1}, a'_{i_2}, a'_{i_3}$ does not agree, simultaneously, in any component. This could not happen if we have five or more repeated columns $a_1,a_2,a_3,a_4,a_5$.
Therefore, since matrix $M$ has length $2^{\sigma+\tau}$ and has no five repeated binary columns, so $M$ has no three repeated $Q_8$-columns, we conclude that $M$ has duplicated all $Q_8$-columns. Hence, $M$ is a quatriplication of a $\Z_4$-linear hadamard code.
\end{proof}

From Theorem~\ref{prop:z2z4} and the rest of results of this section we see that there are two big classes of Hadamard $\qq$-codes. All codes contains the all one vector $\bu$, but there are codes where there exist an element $r_1$, of order four, such that $r_1^2 =\bu$ (codes of shape $1^*$, $2$, $4^*$ and $5$) and there are codes where $\bu$ is not the square of any other element of order four (codes of shape $1$, $3$ and $4$). We will define the new parameter $\tauB=\tau-1$ in the first case ($r_1^2=\bu$) and $\tauB=\tau$ in the second case ($r_1^2\not= \bu$). The existence conditions for Hadamard $\qq$-codes easily come from  Theorem~\ref{prop:z2z4} and \cite{prv}, where it was stated the existence conditions for Hadamard $\Z_2\Z_4$-linear codes.

Table~\ref{table:ex} summarizes what we have done in this section.

\begin{table}[ht]
$$
\begin{array}{|l|c|c|c|c|l|}
\hline
& \multicolumn{3}{c|}{\Z_2^{k_1}\times\Z_4^{k_2}\times\cQ_8^{k_3}}&&\\
\cline{2-4} \multicolumn{1}{|c|}{\mbox{shape}} &\multicolumn{1}{|c|}{k_1}&\multicolumn{1}{|c|}{k_2}&\multicolumn{1}{|c|}{k_3} &\multicolumn{1}{|c|}{\cC} &\multicolumn{1}{|c|}{\mbox{existence}}\\
\hline
\hline
1^* ~ (r_1^ 2=\bu,\tauB=\tau-1,\upsilon=0)  & 0 & 2^{\sigma+\tau-2} & 0 & \cA& \forall \tau \leq \lfloor \frac{m+1}{2}\rfloor;\\
&&&&&\sigma=m-\tau+1\\
\hline
1 ~~ (r_1^ 2 \ne \bu, \tauB=\tau,\upsilon=0)& 2^{\sigma-1}  & (2^\tau-1)2^{\sigma-2} & 0 &\cA& \forall \tau \leq \lfloor \frac{m}{2}\rfloor;\\
&&&&&\sigma=m-\tau+1 \\
\hline
2 ~~ (r_1^ 2=\bu=s_1^2,\tauB=\tau-1,\upsilon=1) & 0 & 0 & 2^{\sigma+\tau-2} & \cA \rtimes \Z_4 \big/ (\bu,s_1^2)& \forall \tau \leq \lfloor \frac{m}{2}\rfloor;\\
&&&&&\sigma=m-\tau\\
\hline
3 ~~ (r_1^ 2 \ne \bu=s_1^2, \tauB=\tau,\upsilon=1)  & 0 & 2^{\sigma-1} & (2^\tau-1)2^{\sigma-2} &\cA \rtimes \Z_4 \big/ (\bu,s_1^2)& \forall \tau \leq \lfloor \frac{m-1}{2}\rfloor;\\
&&&&&\sigma=m-\tau\\
\hline
4  ~~ (r_1^ 2 \ne \bu \ne s_1^2, \tauB=\tau,\upsilon=1) & 2^{\sigma-1} & 0 & 2^{\sigma-3} &\cA \rtimes \Z_4 \big/ (r_1^2,s_1^2) &\mbox{$m$ even; } \tau=1;\\
&&&&&\sigma = \frac{m}{2}+1\\
\hline
4^*  ~ (r_1^ 2 = \bu\ne s_1^2,\tauB=\tau-1,\upsilon=1) & 0 & 2^\sigma  & 2^{\sigma-1} &\cA \rtimes \Z_4 \big/ (r_2,s_1^2)& \mbox{$m$ even; } \tau=2;\\
&&&&&\sigma = \frac{m}{2}-1\\
\hline
5 ~~ (r_1^ 2=\bu,\tauB=\tau-1,\upsilon=2) & 0 &0& 2^{\sigma+1} &\cA\rtimes (\Z_4 \times\Z_4)\big/ (r_1^2,s_1^2)(r_2^2,s_2^2)& \tau=2;\\
&&&&&\sigma=m-3\\
\hline
\end{array}
$$
\label{table:ex}
\caption{Existence conditions and parameters $k_1,k_2,k_3$ depending on the shape of Hadamard $\qq$-codes of length $n=2^m$, where $m=\sigma+\tau+\upsilon-1$}
\end{table}

\section{Rank and kernel dimension of Hadamard $\qq$-codes}\label{sec:kernelrank}

In this section we show the conditions that $s_1$ and $s_2$ must fulfill in order to compose a code with specific values for the dimension of the kernel and the rank. Later, in the next section, our focus will be the construction of codes $\cC$, by adding the generators $s_1$ and, optionally, $s_2$ to a previous subgroup $\cA(\cC)$.

In the proof of Theorem~\ref{main} we saw that $r_1^2=\bu$ for codes of shapes $1^*, 2, 4^*,5$ and $\bu \notin \gen{r_1^2 \dots r_\tau^2}$ for the other shapes. \\Let $\cA(\cC)=\langle  x_1,\dots,x_{\sigma}, r_1, \ldots, r_\tau \rangle$ and let $\cR(\cC)$ be
defined by
$$\left\{\begin{array}{ll}
\cR(\cC)=\gen{x_1...x_\sigma,r_2...r_\tau};& \mbox{if $r_1^2=\bu$} \\
\cR(\cC)=\cA(\cC) ;& \mbox{if $r_1^2 \not= \bu$} \\
\end{array}\right.
$$


With this definition, we have the following technical lemmas.

\begin{lemm}\label{lem:10}
Let $a,b\in \cR(\cC) \backslash T(\cC)$ which are not in the same coset of $T(\cC)$, so $ba^{-1}\notin T(\cC)$ then:
\begin{enumerate}
\item $a^2, b^2, (ab)^2 \notin$ $\{\be,\bu\}$ and $\w(a^2) =\w(b^2) =\w((ab)^2)=n/2$
\item $\w((a\ccolon b))=n/4$ and so $(a\ccolon b) \notin \cC$.
\item With the same hypothesis as for $a,b$, let $a',b'$ a different pair, such that the different elements in $\{a,b,a',b'\}$ are pairwise not in the same coset of $T(\cC)$. Then $(a\ccolon b)\not=(a'\ccolon b')$.
\end{enumerate}
\end{lemm}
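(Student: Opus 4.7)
For item 1, since $\cR(\cC)$ is abelian by construction (its generators commute by Theorem~\ref{main}), $a$ and $b$ commute and $(ab)^2=a^2b^2$. Elements of $\cR(\cC)\setminus T(\cC)$ have order four, so $a^2,b^2\in T(\cC)\setminus\{\be\}$; by the very definition of $\cR(\cC)$, squares of its elements also avoid $\bu$. For $(ab)^2$: equality with $\be$ would force $a^2=b^2$ and hence $ab^{-1}\in T(\cC)$, contradicting that $a,b$ lie in distinct cosets, while equality with $\bu$ is excluded since $(ab)^2$ lies in the same square-subgroup. Hence $a^2,b^2,(ab)^2\in T(\cC)\setminus\{\be,\bu\}$, and the Hadamard property of $\Phi(\cC)$ pins the three weights at $n/2$.

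For item 2, apply Lemma~\ref{lem:2}(2) with $[a,b]=\be$ to get $\w((ab)^2)=\w(a^2)+\w(b^2)-2\w((a\ccolon b))$; substituting item 1 yields $\w((a\ccolon b))=n/4$. Since any codeword of $\Phi(\cC)$ has weight in $\{0,n/2,n\}$, the element $(a\ccolon b)$ cannot belong to $\cC$.

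For item 3, I argue by contradiction. Because $\cR(\cC)$ is abelian, swappers of commuting elements are symmetric by Lemma~\ref{previs}(4), so I may treat the pairs as unordered and split on $|\{a,b\}\cap\{a',b'\}|$. If the intersection has one element, without loss of generality $a=a'$ and $b\neq b'$. Bilinearity (Lemma~\ref{previs}(2)) converts $(a\ccolon b)=(a\ccolon b')$ into $(a\ccolon bb')=(a\ccolon b)(a\ccolon b')=\be$, while $bb'\in\cR(\cC)\setminus T(\cC)$ because $b,b'$ lie in distinct $T(\cC)$-cosets. If $a$ and $bb'$ are in distinct cosets, item 2 forces $\w((a\ccolon bb'))=n/4>0$; if they lie in the same coset, writing $a=bb't$ with $t\in T(\cC)$ and applying parts~(5)--(6) of Lemma~\ref{previs} yields $(a\ccolon bb')=(bb')^2$ of weight $n/2$ by item 1. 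Either sub-case contradicts $(a\ccolon bb')=\be$.

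If $\{a,b\}\cap\{a',b'\}=\emptyset$, the four cosets $\bar a,\bar b,\bar{a'},\bar{b'}$ in $V=\cR(\cC)/T(\cC)$ are distinct. The key observation is that, because $\Phi$ maps an order-two entry to an all-ones block in its component, $\Supp(\Phi((x\ccolon y)))=\Supp(\Phi(x^2))\cap\Supp(\Phi(y^2))$, which follows from $M((x\ccolon y))=M(x^2)\cap M(y^2)$ in Lemma~\ref{lem:2}(1). Bilinearity simplifies $(aa'\ccolon bb')=(a\ccolon b)(a\ccolon b')(a'\ccolon b)(a'\ccolon b')=(a\ccolon b')(a'\ccolon b)$ after cancelling the assumed equal pair; its weight (namely $n/4$ by item 2 when $\bar a+\bar b+\bar{a'}+\bar{b'}\neq 0$, or $n/2$ via item 1 applied to $(bb')^2$ when that sum vanishes), combined with the $n/4$ weights of $(a\ccolon b')$ and $(a'\ccolon b)$, pins the quadruple intersection $Q=|\Supp(\Phi(a^2))\cap\Supp(\Phi(b^2))\cap\Supp(\Phi(a'^2))\cap\Supp(\Phi(b'^2))|$ to one value. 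A parallel 4-set inclusion--exclusion computation on $\w(\Phi((aba'b')^2))$, using that pairwise intersections are always $n/4$ and tracking which of the four triple coset-sums $\bar{a_{i}}+\bar{a_{j}}+\bar{a_{k}}$ vanish (so that the corresponding triple support-intersection is $0$ rather than $n/8$), pins $Q$ to a second value; case-by-case the two disagree. This disjoint case is the main obstacle: the delicate point is the bookkeeping of vanishing coset-sums, but in each pattern the swapper-derived $Q$ and the inclusion--exclusion-derived $Q$ are incompatible, completing the proof.
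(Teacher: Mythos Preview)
Your treatment of items 1--2 and of the one-shared-element sub-case of item 3 is correct and essentially parallel to the paper (your reduction of that sub-case directly to items 1--2 is arguably cleaner than the paper's explicit identity $a^2b^2b'^2=\bu$). For the disjoint sub-case your support--intersection framework is valid and genuinely different from the paper's component-weight count, but you stop at ``case-by-case the two disagree'' without actually doing the bookkeeping. The approach does go through: when $\bar a+\bar b+\bar{a'}+\bar{b'}\neq 0$ the swapper side gives $Q=n/8$ and inclusion--exclusion on $\w((aba'b')^2)=n/2$ forces the sum $T$ of the four triple intersections to be $5n/8$, impossible since each lies in $\{0,n/8\}$; when the four-sum is zero one gets $Q=0$ and hence $T=n/4$, yet then no triple coset-sum can vanish (else some $\bar{a_i}=0$), forcing $T=4\cdot n/8=n/2$. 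So the omission is expository rather than mathematical, but it is precisely the step that needed to be written out.

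You are also working harder than necessary. The hypothesis $(a\ccolon b)=(a'\ccolon b')$ says exactly that $A\cap B=A'\cap B'$ (with $A=\Supp(\Phi(a^2))$, etc.), so immediately $Q=|A\cap B\cap A'\cap B'|=|A\cap B|=n/4$, already contradicting your swapper-derived $Q\in\{0,n/8\}$; the second inclusion--exclusion on $(aba'b')^2$ is superfluous. Even more directly: $A\cap B\cap A'=(A'\cap B')\cap A'=A'\cap B'$ has size $n/4$, while $4\,|A\cap B\cap A'|=\w((aba')^2)\in\{0,n/2\}$ by item 1, which finishes the disjoint case in one line.
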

\begin{proof}~

\begin{itemize}
\item Elements $a,b$ are not in $T(\cC)$ so their square is not $\be$. Also, the construction of $\cR(\cC)$ explicitly excludes any element with square equal to $\bu$.
The product $ab$ is also an element of $\cR(\cC))$, thus their square can not be $\bu$. Moreover, if $(ab)^2=\be$ then $a=bT(\cC)$ which contradicts the hypothesis. This proves the first item.
\item As the elements $a,b$ commute, we have from Lemma~\ref{lem:2} $n/2=\w((ab)^2)=\w(a^2b^2)=\w(a^2)+\w(b^2)-2\w( (a\ccolon b) )=n/2+n/2-2\w((a\ccolon b))$. Hence, $\w((a\ccolon b)) = n/4$. This proves the second item.
\item Suppose $(a\ccolon b)=(a'\ccolon b')$. Since $\w((a\ccolon b))=\w((a'\ccolon b'))=n/4$ there are some components (for a total weight of $n/8$) where all $a,b,a',b'$ share an entry of order four. The rest of components of order four (for a total weight of $n/8$) in each $a,b,a',b'$ is not shared at all, since the elements are pairwise not in the same coset of $T(\cC)$. This situation is not possible in the case where all $a,b,a',b'$ are different, for we obtain a vector of length $5n/4$. If, without loss of generality, we suppose $b=a'$ we obtain $a^2b^2b'^2= \bu\in \cR(\cC)$, a contradiction.
\end{itemize}
\end{proof}

\begin{lemm}\label{lem:r4b_p2}
Let $\cC$ be a Hadamard $\qq$-code of shape 2 and length $n$ with a standard set of generators $\cC=\gen{x_1 \dots x_\sigma;r_1 \dots r_\tau;s_1}$ where, by definition of shape 2, $r_1^2=s_1^2=[r_1,s_1]=\bu$. Let $r$ be and element of order four in $\cA(\cC)$ such that $r^2 \neq \bu$ and let $x\in T(\cC)$ be an element which is not the square of any other element of $\cC$. If $(s_1:r_1)=x$ then
\begin{enumerate}
\item $|M((s_1 \ccolon r))|=k_3/4$; $\w((s_1 \ccolon r))=n/4$ and $(s_1 \ccolon r) \not \in \cC$.
\item for any element $r \in \cR(\cC)$ we have:
\begin{enumerate}
\item $(s_1\colon r) \not= (a\colon b)$, for any $a,b \in \cR(\cC)\backslash T(\cC)$ such that $ba^{-1}\in T(\cC)$.
\item $(s_1\colon r) \not= (s_1\colon a)$, for any $a \in \cR(\cC)\backslash T(\cC)$ such that $s_1a^{-1}\in \langle T(\cC),r_1\rangle$.
\end{enumerate}
\end{enumerate}
\end{lemm}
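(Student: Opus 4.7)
The proof will combine structural facts peculiar to shape~2 (namely $k_1=k_2=0$, so all components live in $\cQ_8$ with $k_3=n/4$, and $s_1^2=r_1^2=\bu$, which forces every component of $s_1$ and of $r_1$ to be an order-four element of $\cQ_8$) with a careful component-wise analysis of the swapper using Tables~\ref{table:1} and~\ref{table:2}.

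For item 1, the plan is to first pin down the support of $(s_1\ccolon r)$. Since $r\in\cA(\cC)$ and $\cA(\cC)$ is abelian (Theorem~\ref{main}), $[r,r_1]=\be$; inspecting Table~\ref{table:2}, this forces at every position $i\in M(r^2)$ the components $r_i$ and $r_{1,i}$ to lie in the same $\cQ_8$-class. Because the swapper depends only on those classes, I get $(s_{1,i}\ccolon r_i)=(s_{1,i}\ccolon r_{1,i})=x_i$ for $i\in M(r^2)$, while outside $M(r^2)$ the component $r_i\in\{\one,\ba^2\}$ gives $(s_{1,i}\ccolon r_i)=\one$ from the first column of Table~\ref{table:1}. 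Thus $M((s_1\ccolon r))=M(x)\cap M(r^2)$. I then apply Lemma~\ref{lem:r4b_p1} to $x,r^2\in T(\cC)$. The hypotheses $x,r^2\notin\{\be,\bu\}$ are immediate ($x$ is not a square so $x\ne\be,\bu$; $r^2\ne\be$ because $r$ has order four, and $r^2\ne\bu$ by hypothesis). The only subtle case is $xr^2\ne\bu$: were $xr^2=\bu$, then $x=\bu r^2=r_1^2r^2=(r_1r)^2$ by Lemma~\ref{lem:2} (since $[r_1,r]=\be$ in the abelian $\cA$), making $x$ the square of $r_1r\in\cC$, a contradiction. Hence $|M((s_1\ccolon r))|=k_3/4$, giving $\w((s_1\ccolon r))=n/4$. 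Finally $(s_1\ccolon r)\notin\cC$: its components lie in $\{\one,\ba^2\}$, so if it were in $\cC$ it would sit in $T(\cC)$, but elements of $T(\cC)\setminus\{\be,\bu\}$ in a Hadamard code have weight $n/2$, not $n/4$.

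For item 2(a), suppose $b=at$ with $t\in T(\cC)$. Lemma~\ref{previs} items 2, 5 and 6 give
\[
(a\ccolon b)=(a\ccolon a)(a\ccolon t)=a^2\cdot\be=a^2.
\]
Since $a\in\cR(\cC)\setminus T(\cC)$, the element $a^2$ lies in $T(\cC)\setminus\{\be,\bu\}$, so $\w(a^2)=n/2\neq n/4=\w((s_1\ccolon r))$, giving the required inequality. For item 2(b) I will observe that the set of admissible $a$ is empty: $\cR(\cC)\subseteq\cA$ and $\langle T(\cC),r_1\rangle\subseteq\cA$, so $s_1a^{-1}\in\langle T(\cC),r_1\rangle$ would force $s_1\in\cA$, contradicting $s_1\notin\cA$; the statement is then vacuously true.

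The main obstacle is the class-matching argument in the first step, where one has to leverage the abelianness of $\cA$ and the specific structure of Tables~\ref{table:1} and~\ref{table:2} to see that $(s_1\ccolon r)$ ``copies'' $x$ on $M(r^2)$, together with the verification that $xr^2\neq\bu$, which is precisely where the hypothesis that $x$ is not a square in $\cC$ is used in an essential (rather than trivially-excluding) way.
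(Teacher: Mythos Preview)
Your argument for item~1 is correct and follows the same route as the paper: both establish $M((s_1\ccolon r))=M(x)\cap M(r^2)$ and then invoke Lemma~\ref{lem:r4b_p1}. You are in fact more careful than the paper in verifying the hypotheses of Lemma~\ref{lem:r4b_p1}, in particular the case $xr^2\neq\bu$, which is exactly where the assumption that $x$ is not a square is needed.

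For item~2, however, your proof---though formally valid for the statement as literally printed---misses the intended content. The conditions ``$ba^{-1}\in T(\cC)$'' in~(a) and ``$s_1a^{-1}\in\langle T(\cC),r_1\rangle$'' in~(b) are evidently misprints for their negations: the paper's own proof takes $r_2,r_3,r_4\in\cR(\cC)\setminus T(\cC)$ \emph{pairwise in distinct cosets of $T(\cC)$} and rules out $(s_1\ccolon r_2)=(s_1\ccolon r_3)$, $(s_1\ccolon r_2)=(r_3\ccolon s_1)$, and $(s_1\ccolon r_2)=(r_3\ccolon r_4)$. This is the non-trivial assertion actually used in Proposition~\ref{bounds}, item~4(b), to count $\tauB$ new independent swappers contributing to the rank. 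Your treatment of~(a) reduces $(a\ccolon b)$ to $a^2$ (weight $n/2$), and your treatment of~(b) shows the hypothesis is vacuous; neither touches the case of genuinely distinct cosets, so neither supports the downstream rank computation. To match the paper you would need to argue, for $a,b$ in different cosets of $T(\cC)$, that $(s_1\ccolon r)=(a\ccolon b)$ forces $M(x)\cap M(r^2)=M(a^2)\cap M(b^2)$, and derive a contradiction from the resulting overcount of components (five disjoint pieces of size $k_3/4$ inside a set of size $k_3$); and similarly that $(s_1\ccolon r)=(s_1\ccolon a)$ with $ra^{-1}\notin T(\cC)$ gives $(s_1\ccolon ra)\in\cC$, contradicting item~1.
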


\begin{proof}
Taken into account that $[r_1,r]=\be$ and $r_1^2 = \bu \neq r^2$, from Lemma~\ref{lem:2} we have $M((s_1 \ccolon r))=M((s_1 \ccolon r_1)) \cap M(r^2)=M(x) \cap M(r^2)$ and from Lemma \ref{lem:r4b_p1} $|M(x) \cap M(r^2)|=k_3/4$. This proves the first part of the first item. Since for codes of shape 2 all non-zero components of order two has binary length four, the second part of the first item comes. As all elements of $\cC$ must have weight in $\{n,n/2,0\}$, we conclude that $(s_1 \ccolon r) \not \in \cC$ for any $r$. The first item is done.

For the second item, let $r_2,r_3,r_4$ elements of $\cR(\cC)\backslash T(\cC)$, pairwise not in the same coset of $T(\cC)$. First assume $(s_1 \ccolon r_2)=(s_1 \ccolon r_3)$. In this case we have $(s_1 \ccolon r_2r_3)=\be \in \cC$ which contradicts Lemma~\ref{lem:10}.
Now assume $(s_1 \ccolon r_2)=(r_3 \ccolon s_1)$ then, from Lemma~\ref{lem:3}, $(s_1 \ccolon r_2r_3)=(s_1 \ccolon r_2)(s_1 \ccolon r_3)=r_3^2(r_3 \ccolon s_1)^2=r_3^2 \in \cC$, against the above item in this lemma.
Finally, assume $(s_1 \ccolon r_2)=(r_3 \ccolon r_4)$. Since $M((s_1 \ccolon r_2))=M((s_1 \ccolon r_1)) \cap M(r_2^2)=M(x) \cap M(r_2^2)$ and $M((r_3 \ccolon r_4))=M(r_3^2) \cap M(r_4^2)$, we obtain $M(x) \cap M(r_2^2) = M(r_3^2) \cap M(r_4^2)$. The above equality  means that $M(x)$, $M(r_2^2)$, $M(r_3^2)$, $M(r_4^2)$ have $k_3/4$ elements in common, while the other $k_3/4$ elements in each one of these sets are disjoint from each other. This is not possible, for a total of $5k_3/4$ components is needed for the above composition.
The second item is proved.
\end{proof}

Now, we can enumerate a list of cases depending on $\tau$, $\tauB$ and $\upsilon$ from which we obtain later, in the next section, a code with an specific dimension of the kernel and rank.

\begin{prop}\label{bounds}
Let $\cC$ be a subgroup of $\Z_2^{k_1}\times \Z_4^{k_2}\times \cQ_8^{k_3}$ such that $C=\Phi(\cC)$ is a Hadamard code generated by $\gen{\cA(\cC),s_1,s_\upsilon}$. The values of the dimension of the  kernel and rank depends on $\tau$, $\tauB$ and $\upsilon$ according to the following cases:

\begin{enumerate}
\item In the case $\upsilon=0$ ($\Z_2\Z_4$-code) we have
\begin{enumerate}
\item if $\tauB \le 1$ (that is, $\tau \le 2$ and $r_1^2=\bu$ or $\tau \le 1$ and $r_1^2 \neq \bu$) the code is linear and $k=r=\sigma+\tau$;
\item if $\tau \ge 3$ and $r_1^2=\bu$ then $k = \sigma + 1$, $r=\sigma+\tau+\binom{\tau-1}{2}$ and $C$ is a $\Z_2\Z_4$-linear code of type $2^{\sigma-\tau}4^{\tau}$;
\item if $\tau \ge 2$ and $r_1^2\not=\bu$ then $k = \sigma$, $r=\sigma+\tau+\binom{\tau}{2}$ and $C$ is a $\Z_4$-linear code of type $2^{\sigma-\tau}4^{\tau}$.
\end{enumerate}
\item In the case $\tau=1, \upsilon = 1$ we have
\begin{enumerate}
\item if $(s_1\ccolon r_1) \in \cC$ then $C$ is linear and $k=r=\sigma+2$;
\item if $(s_1\ccolon r_1) \notin \cC$ then $k=\sigma$ and $r=\sigma+3$.
\end{enumerate}
\item In the case $\tau=2, \tauB=1, \upsilon=1$, consider the swappers $(s_1\ccolon r_1)$, $(s_1\ccolon r_2)$ and $(s_1\ccolon r_1r_2)$:
\begin{enumerate}
\item if these three swappers are in $\cC$ then $k = r =  \sigma + 3$;
\item if one of them is in $\cC$ then $k = \sigma + 1$, $r=\sigma+4$;
\item if none of them is in $\cC$ then $k = \sigma, r=\sigma+5$.
\end{enumerate}
\item In the case $\tau \ge \tauB \ge 2, \upsilon=1$ we have
\begin{enumerate}
\item if $(bs_1\ccolon a) \in \cC$ for all $a \in \cA(\cC)$ and some $b\in \cR(\cC)$, then $k = \sigma + \tau-\tauB + 1$, $r=\sigma+\tau+1+\binom{\tauB}{2}$;
\item if the previous condition is not satisfied but $(r_1\ccolon s_1) \in \cC$ and $r_1^2=\bu$ then $k = \sigma+1$ and $r = \sigma+\tau+1+\binom{\tauB+1}{2}$;
\item if none of the previous conditions is satisfied then $k = \sigma$ and
\begin{eqnarray*}
\sigma+\tau+\upsilon+\binom{\tau-1}{2} \le r \le \sigma+\tau+\upsilon+\binom{\tau}{2}+1 \,\mbox{when } \tau=\tauB+1,(r_1^2=\bu)\\
\sigma+\tau+\upsilon+\binom{\tau}{2}+1 \le r \le \sigma+\tau+\upsilon+\binom{\tau+1}{2}, \,\mbox{when } \tau=\tauB, (r_1^2 \ne \bu)
\end{eqnarray*}
\end{enumerate}
\item In the case $\tau=2 \lland \upsilon=2$, consider the two swappers $(r_2\ccolon s_2)$ and $(r_1r_2\ccolon s_1s_2)$.
\begin{enumerate}
\item if both swappers are in $\cC$ then $C$ is linear;
\item if only one of the two swappers is in $\cC$ then $k = \sigma + 2$ and $r=\sigma+5$
\item if none of the two swappers is  in $\cC$ then $k = \sigma$ and $r=\sigma+6$
\end{enumerate}
\end{enumerate}
\end{prop}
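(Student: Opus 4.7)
The plan is to combine two criteria established in the preliminaries: first, $\Phi(a) \in K(C)$ if and only if every swapper $(a\ccolon c)$ with $c \in \cC$ lies in $\cC$; second, the rank of $C$ equals $\log_2|\langle \cC \cup S(\cC)\rangle|$. Since swappers are multiplicative in each argument (Lemma~\ref{previs} item 2), both checks reduce to a finite verification on the standard generators $x_i, r_j, s_k$. The identity $(x_i\ccolon \cdot) = \be$ from Lemma~\ref{previs} item 6 gives $\Phi(T(\cC)) \subseteq K(C)$, hence the universal lower bound $k \ge \sigma$. The intra-$\cA$ swappers $(r_i\ccolon r_j)$ are governed by Lemma~\ref{previs} items 5 and 7: when $r_1^2 = \bu$ we have $(r_1\ccolon r_j) = r_j^2 \in \cC$, and the remaining swappers reflect the $\Z_2\Z_4$-linear structure of $\Phi(\cA)$ established in Theorem~\ref{prop:z2z4}.

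For Case~1 ($\upsilon = 0$) the code is $\Phi(\cA)$ itself and the claims reduce to the known rank and kernel formulas for Hadamard $\Z_2\Z_4$-linear codes, with the sub-case $\tauB \le 1$ handled by noting that $\cA$ becomes linear after the Gray map. For Cases~2, 3 and 5 the list of swappers is short and I would enumerate the possibilities exhaustively. In Case~2 only $(s_1\ccolon r_1)$ is nontrivial: if it lies in $\cC$ then $\cC$ is closed under swapping and $C$ is linear; otherwise Lemma~\ref{previs} item 4 forces $(r_1\ccolon s_1) = [r_1,s_1](s_1\ccolon r_1) \notin \cC$, so neither $\Phi(r_1)$ nor $\Phi(s_1)$ is a kernel vector and the rank grows by exactly one. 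Case~3 proceeds similarly using the three swappers $(s_1\ccolon r_1), (s_1\ccolon r_2), (s_1\ccolon r_1r_2)$; multiplicativity forces either all three, exactly one, or none to lie in $\cC$, and in the middle sub-case the corresponding generator's image joins the kernel, giving $k = \sigma + 1$. Case~5 is parallel, with two essentially independent swappers $(r_2\ccolon s_2)$ and $(r_1r_2\ccolon s_1s_2)$.

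The main obstacle is Case~4 ($\tau \ge \tauB \ge 2$, $\upsilon = 1$), where up to $\tau$ potentially nontrivial swappers $(s_1\ccolon r_j)$ interact with the intra-$\cA$ swappers. I would proceed in three steps. First (sub-case 4a), I would show that if some $b \in \cR(\cC)$ satisfies $(bs_1\ccolon a) \in \cC$ for every $a \in \cA(\cC)$, then $\Phi(bs_1) \in K(C)$; combining this with the membership of $\Phi(r_1)$ in $K(C)$ when $r_1^2 = \bu$ accounts for the factor $\tau - \tauB$ and yields $k = \sigma + (\tau - \tauB) + 1$. Second (sub-case 4b), when no such $b$ exists but $r_1^2 = \bu$ and $(r_1\ccolon s_1) \in \cC$, Lemma~\ref{lem:r4b_p2} (applied in shape~2) blocks any $\Phi(bs_1)$ from being kernel and forces $\Phi(r_1)$ to be the single extra contribution, giving $k = \sigma + 1$. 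Third (sub-case 4c), when neither condition holds the kernel collapses to $\Phi(T(\cC))$ and the rank is bracketed by counting independent swappers outside $\cC$; Lemma~\ref{lem:10} item 3 ensures that distinct pairs in $\cR(\cC) \setminus T(\cC)$ yield distinct swappers, so the upper and lower bounds correspond to whether the family $\{(s_1\ccolon r_j)\}$ is linearly independent modulo the swappers already internal to $\cA$.

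Throughout, the arithmetic of the rank increments rests on the observation that each independent swapper outside $\cC$ doubles the linear span, so the rank exceeds $m+1 = \log_2|\cC|$ by the $\Z_2$-dimension of the image of $S(\cC)$ in $T(\cG)/(T(\cG) \cap \cC)$; applying this principle to each configuration in Cases~2--5 produces the stated formulas and bounds.
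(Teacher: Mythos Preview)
Your overall strategy matches the paper's: reduce kernel membership and rank to swapper membership on the standard generators via Lemma~\ref{previs}, then case-split. Cases~1--3 are handled essentially as in the paper. Two places, however, need more than you have sketched.

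For Case~5 you assert that only the two swappers $(r_2\ccolon s_2)$ and $(r_1r_2\ccolon s_1s_2)$ matter, but you do not justify this. The paper passes to the generating set $\{r_1r_2, r_2, s_1s_2, s_2\}$ and must check that the remaining four pairwise swappers are \emph{trivially} $\be$: the identities $(r_1r_2\ccolon r_2)=(s_1s_2\ccolon s_2)=\be$ follow from Lemma~\ref{previs}, but $(r_1r_2\ccolon s_2)=(s_1s_2\ccolon r_2)=\be$ requires the observation that $(r_1r_2)^2=\bu r_2^2=\bu s_2^2$, so $\Supp((r_1r_2)^2)$ is disjoint from $\Supp(s_2^2)$ and $\Supp(r_2^2)$, whence Lemma~\ref{lem:2} forces these swappers to vanish. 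Without this reduction you cannot exclude additional kernel defect or rank growth, and your ``parallel to Case~3'' claim is unsupported.

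For Case~4b you invoke Lemma~\ref{lem:r4b_p2}, but that lemma carries a hypothesis you have not verified: it applies only when $(s_1\ccolon r_1)=x$ with $x\in T(\cC)$ \emph{not the square of any element of} $\cC$. The paper supplies this by contrapositive: if $(s_1\ccolon r_1)=b^2$ for some $b$, then $(bs_1\ccolon r_1)=(b\ccolon r_1)(s_1\ccolon r_1)=b^2b^2=\be$, and one checks $(bs_1\ccolon a)\in\cC$ for all $a\in\cA(\cC)$, contradicting the failure of~4a. Furthermore, you describe Lemma~\ref{lem:r4b_p2} as ``blocking any $\Phi(bs_1)$ from being kernel''; in the paper it serves a different purpose. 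Kernel blocking already follows from the negation of condition~4a. Lemma~\ref{lem:r4b_p2} is what pins down the \emph{exact} rank: its item~2 certifies that the $\tauB$ swappers $(s_1\ccolon r)$ with $r\in\cR(\cC)$ are pairwise distinct and also distinct from every intra-$\cR$ swapper $(a\ccolon b)$, giving $r=\sigma+\tau+1+\binom{\tauB}{2}+\tauB=\sigma+\tau+1+\binom{\tauB+1}{2}$ rather than merely an inequality.
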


\begin{proof}~

\begin{enumerate}
\item In the case $\upsilon=0$ we have that $\cC$ is abelian and the shape is 1.\\
If $r_1^2=\bu$, any element $c \in \cC$ can be written as $c=xr_1^{i}r$ with $x \in T(\cC)$, $r \in R(\cC)$ and $i \in \{0,1\}$. The element $\phi(c)$ belongs to $K(C)$ if the swapper of $c$ with every element in $\cC$ is still in $\cC$. Hence, $\phi(c)\in K(C)$ if and only if $(c\ccolon r_1) \in \cC$ and $(c\ccolon r) \in \cC$.
From Lemma~\ref{previs} we see that $(r\ccolon r_1)=(r\ccolon r)=r^2$ and for all $\rB \in R(\cC)$ with $\rB \not= rT(\cC)$ we have $(r\ccolon \rB) \notin \cC$. Hence, we conclude with $K(C)=\langle T(\cC),r_1 \rangle$. From Lemma \ref{lem:10}, all swappers of two elements in $R(\cC)$ are different. Therefore, if $\tauB > 1$ and $r_1^2=\bu$ we have $k=\sigma+1$ and $r=\sigma+\tau+\binom{\tau-1}{2}$. With the same argumentation, if $\tauB > 1$ and $r_1^2\not=\bu$ we have $k=\sigma$ and $r=\sigma+\tau+\binom{\tau}{2}$. In both cases, if $\tauB \le 1$ we have $r-k \leq 1$ and so the code is linear and $k=r=\sigma+\tau$.

\item The case $\tau=1 \lland \upsilon = 1$ means that $\cC=\langle T(\cC), r_1, s_1\rangle$. Since the linear span of $C$ is generated by $\cC$ and all swappers of pairs of elements in $\cC$ we conclude that $\langle C \rangle$ is the binary image by the Gray map of $\langle \cC, (r_1\ccolon s_1)\rangle$. Hence, either $(s_1\ccolon r_1) \in \cC$ and the code is linear or $k=\sigma$ and $r=\sigma+\tau+\upsilon+1=\sigma+3$.

\item In the case $\tau=2 \lland \upsilon=1 \lland \tauB = 1$ the code $\cC$ is of shape 2 with $r_1^2=s_1^2=\bu \neq r_2^2$ or is of shape $4^*$ with $r_1^2=\bu \neq s_1^2=r_2^2$.
Any element $c \in \cC$ can be written as $c=xr_1^{i}r_2^{j}s_1^{k}$ where $x \in T(\cC)$ and $i,j,k\in \{0,1\}$. It belongs to $K(\cC)$ if the swapper of $c$ with every element in $\cC$ is still in $\cC$. Hence, from Lemma~\ref{previs}, $c\in K(C)$ if and only if $(s_1^{k}\ccolon r_1),(s_1^{k}\ccolon r_2) \in \cC$ (recall that when $(s_1\ccolon r_1) \in \cC \lland (s_1\ccolon r_2) \in \cC$ then also $(s_1\ccolon r_1r_2) \in \cC$).
If all swappers above are in $\cC$ then code $C$ is linear and  $K(C)=\langle T(\cC),r_1,r_2,s_1 \rangle$. \\
If some swapper does not belong to $\cC$, for instance, $(s_1\ccolon r_1) \in \cC \lland (s_1\ccolon r_2) \not \in \cC$ then $(s_1\ccolon r_1r_2) =(s_1\ccolon r_1)(s_1\ccolon r_2)\notin \cC$, hence $K(C)=\langle T(\cC),r_1\rangle$ and $\langle C \rangle = \phi(\langle T(\cC), r_1, r_2, s_1, (s_1\ccolon r_2)\rangle)$. The same argumentation works for the other instances proving the statement.\\
If none of the swappers belong to $\cC$ then $\langle C \rangle = \phi(\langle T(\cC), r_1, r_2, s_1, (s_1\ccolon r_2), (s_1\ccolon r_1) \rangle)$ and $K(\cC)=T(\cC)$.

Note that if $s_1^2 \neq \bu$ then $M((r_1r_2)^2) \cap M(s_1^2) = \varnothing$, thus by Lemma \ref{lem:2}, $(r_1r_2\ccolon s_1)=\be$.

\item
In the case $\tau \ge \tauB \ge 2 \lland \upsilon=1$ the code is of shape 2 with $r_1^2=s_1^2=\bu$,  or shape 3 with $s_1^2=\bu \not \in \gen{r_1^2 \dots r_{\tau}^2}$.

Assume $r_1^2=\bu$ and $(bs_1\ccolon a) \in \cC$ for all $a \in \cA(\cC)$ and some $b \in \cR(\cC)$. We can assert that $(s_1\ccolon r_1) \in \cC$ because, from Lemma~\ref{previs}, $(bs_1\ccolon r_1)=(b\ccolon r_1)(s_1\ccolon r_1)=b^2(s_1\ccolon r_1) \in \cC$. In this way we have $K(C)=\gen{T(\cC),r_1,bs_1}$  and the linear span is generated by $\cC$ and the swappers of pairs in $\cR(\cC)$ (from Lemma-\ref{lem:10} we known there are a total amount of $\binom{\tauB}{2}$ swappers of this kind to be included in the generator set of the linear span). If $r_1^2\not=\bu$  we have $K(\cC)=\gen{T(\cC),bs_1}$ and the linear span is generated as before. Hence, $k = \sigma +1+\tau-\tauB$, $r=\sigma+\tau+\upsilon+\binom{\tauB}{2}$.

Assume now $r_1^2=\bu \lland (s_1\ccolon r_1) \in \cC$ (but not the previous condition about $(bs_1\colon a)$). If $(s_1\ccolon r_1)=b^2$ where $b$ is an element of $\cA(\cC)$, then $(bs_1 \ccolon r_1)=(b \ccolon r_1)(s_1 \ccolon r_1)=b^2b^2=\be$.
So $(bs_1 \ccolon a)=\be$ for all $a \in \cA(\cC)$ and some $b \in \cR(\cC)$, and the previous condition is fulfilled, a contradiction. Thus, without lost of generality, we can assert $(s_1\ccolon r_1)=x$ with $x \in T(\cC)$ and $x$ is not the square of any other element of $\cC$.
In this case $K(\cC)=\gen{T(\cC),r_1}$ and the linear span of $C$ is generated by $\cC$, the swappers of pairs in $\cR(\cC)$ (from Lemma-\ref{lem:10}, $\binom{\tauB}{2}$ swappers), and the swappers of $s_1$ with the elements in $R(\cC)$ (from Lemma~\ref{lem:r4b_p2}, $\tauB$ swappers). Hence, $k = \sigma+1$, $r = \sigma+\tau+\upsilon+\binom{\tauB}{2}+\tauB = \sigma+\tau+\upsilon+\binom{\tauB+1}{2}$.

When $(s_1\ccolon r_1) \not \in \cC \lland r_1^2 =\bu$, we have $\tauB=\tau-1$ and $K(\cC)=T(\cC)$. The linear span is generated by $\cC$, the swappers of pairs in $\cR(\cC)$ (from Lemma-\ref{lem:10}, $\binom{\tauB}{2}$ swappers), the swapper $(r_1\ccolon s_1)$ and the swappers of $s_1$ with elements of $\cR(\cC)$ that do not belong to $\cC$. Hence $k = \sigma$, $\sigma+\tau+\upsilon+\binom{\tauB}{2} \le r \le \sigma+\tau+\upsilon+\binom{\tauB}{2}+\tau=\sigma+\tau+\upsilon+\binom{\tauB+1}{2}+1$.

When $(s_1\ccolon r_1) \not \in \cC \lland r_1^2 \not=\bu$, we have $\tau=\tauB$. Now $K(\cC)=T(\cC)$ and the linear span is generated by $\cC$, the swapper of elements in $\cR(\cC)$ and the swappers of $s_1$ with elements of $\cR(\cC)$ that do not belong to $\cC$ (at least one of these last swappers must not belong to $\cC$, otherwise we are in the previous case 4a). Hence, $\sigma+\tau+\upsilon+\binom{\tauB}{2}+1 \le r \le \sigma+\tau+\upsilon+\binom{\tauB}{2}+\tau=\sigma+\tau+\upsilon+\binom{\tauB+1}{2}$.

\item In the case $\tau=2 \lland \upsilon=2$ the code $\cC$ is of shape 5 with $r_1^2=s_1^2=\bu \neq r_2^2=s_2^2$, $\cC=\gen{T(\cC); r_1,r_2; s_1,s_2}$.

We have $\cC=\gen{T(\cC); r_1,r_2; s_1,s_2}=\gen{T(\cC); r_1r_2,r_2; s_1s_2,s_2}$), so it is enough to analyze the swappers $(r_1r_2\ccolon r_2)$, $(r_1r_2\ccolon s_1s_2)$, $(r_1r_2\ccolon s_2)$, $(r_2\ccolon s_1s_2)$, $(r_2\ccolon s_2)$ and $(s_1s_2\ccolon s_2)$.

From Lemma~\ref{previs}, $(r_1r_2\ccolon r_2)=(r_1\ccolon r_2)(r_2\ccolon r_2)=r_2^2r_2^2=\be$, and $(s_1s_2\ccolon s_2)=$$(s_1\ccolon s_2)(s_2\ccolon s_2)=s_2^2s_2^2=\be$.
Moreover, we have $(r_1r_2)^2=(s_1s_2)^2=\bu r_2^2=\bu s_2^2$, so $\Supp((r_1r_2)^2)$ is disjoint from $\Supp(s_2^2)$ and also from $\Supp(r_2^2)$. The same for $\Supp((s_1s_2)^2)$, which is disjoint from $\Supp(r_2^2)$ and $\Supp(s_2^2)$. Hence, $(r_1r_2\ccolon s_2)=(s_1s_2\ccolon r_2)=(r_1r_2\ccolon r_2)=(s_1s_2\ccolon s_2)=\be$.
Now, only swappers $(r_1r_2\ccolon s_1s_2)$ and $(r_2\ccolon s_2)$ are left in our analysis. When $(r_1r_2\ccolon s_1s_2) \in \cC$ and $(r_2\ccolon s_2) \in \cC$ we have the linear case $K(C)=S(C)=\cC$; if only one of these swappers belongs to $\cC$ we have the case where $k=\sigma+2$ and the linear span is generated by $\cC$ and the swapper that does not belong to $\cC$. Finally, if none of these two swappers belongs to $\cC$ we have $K(\cC)=T(\cC)$ and the linear span is generated by $\cC$ and the two swappers.
\end{enumerate}
\end{proof}

From the above results in Proposition~\ref{bounds} we can specify a little more for what shapes we obtain the values for the rank and dimension of the kernel.
Tables~\ref{taula1} and \ref{taula2} are a summary. We write in the right top corner of each value the corresponding shape and, separate by a colon, the item of Proposition~\ref{bounds} where the case is studied. Note that we do not include the additive cases which were studied in~\cite{rio-2013}. Table~\ref{taula1} covers all items in  Proposition~\ref{bounds}, except 4). Table~\ref{taula2} covers the codes in item 4) of Proposition~\ref{bounds}.
\begin{table}\label{taula1}
$$
\begin{array}{c||c|c|c}
m+1-k& \multicolumn{3}{c}{r-(m+1)} \\
\hline\hline
4 &-&- &  2 ^{[5;5c]} \\
3 &- & 2 ^{[2,4^*;3c]} & - \\
2 &1 ^{[2,3,4;2b]} &1 ^{[2,4^*;3b]} &1^{[5;5b]} \\
0 & 0 ^{[2,4^*;2a]} & 0 ^{[2,4^*;3a]} & 0 ^{[5;5a]}
\end{array}
$$
\caption{Rank and dimension of the kernel for the codes fulfilling Proposition~\ref{bounds}, except item 4)}
\end{table}

\begin{table}\label{taula2}
$$
\begin{array}{c||c|c}
m+1-k & \multicolumn{2}{c}{r-(m+1)} \\
\hline\hline
\tau+1 &\binom{\tau-1}{2}\cdots\binom{\tau}{2}+1 ^{[2;4c]} & \binom{\tau}{2}+1\cdots\binom{\tau+1}{2} ^{[3;4c]}  \\
\tau & \binom{\tau}{2}^{[2;4b]} & \binom{\tau}{2} ^{[3;4a]}\\
\tau-1 & \binom{\tau-1}{2} ^{[2;4a]} &-
\end{array}
$$
\caption{Rank and dimension of the kernel for the codes fulfilling Proposition~\ref{bounds}, item 4)}
\end{table}

\section{Construction of Hadamard $\qq$-codes}\label{sec:construction}

In this section it is shown how to construct Hadamard $\qq$-codes with any allowable pair of values for the rank and the dimension of the kernel. We follow the entries of Proposition~\ref{bounds} explaining, in each case, how to construct the desired Hadamard code. After the constructions, as a summary, we include Theorem~\ref{fites}, where it is described what are the allowable parameters for the dimension of the kernel and, for each one of these values, it is said what is the range of values for the rank. For each one of the possible pair of allowable values for the dimension of the kernel and rank, we construct a Hadamard $\qq$-code fulfilling it. As an illustration of the constructions we include two examples at the end of the section.

We can take as starting point a Hadamard $\Z_2\Z_4$-code $\cD$ of type $2^{\sigma-\tau}4^{\tau}$, which can be constructed using the methods described in \cite{prv,Z2Z4}. Recall that an element $x$ with square equal to $\bu$ is included in $\cD$ if and only if $\cD$ is a $\Z_4$-code.

We define three basic homomorphisms:
\begin{equation}\label{morph1}
\begin{array}{lll}
\chi_1: & \Z_2 \rightarrow \Z_4 &\mbox{such that $\chi_1(x)=2x$,}\\
\chi_2: & \Z_4 \rightarrow \cQ_8 &\mbox{such that  $\chi_2(x)=\ba^x$,}\\
\chi_3: & A \rightarrow A \times A &\mbox{such that  $\chi_3(x)=(x,x)$, where $A\in\{\Z_2,\Z_4\}$.}
\end{array}
\end{equation}

The next theorem is one of the main results in the current paper. For any given pair of allowable parameters $r,k$ we construct a Hadamard $\qq$-code with these parameters.

\begin{theo}\label{constructions}
Let $C$ a Hadamard $\qq$-code of length $2^m$ and $|T(\cC)|=2^\sigma$, where $T(\cC)$ is the subgroup of elements in $\cC$ of order two; $| \cC / \cA(\cC)|=2^\upsilon$; $|\cA(\cC) / T(\cC)|=2^\tau$; $|\cC / T(\cC)|=2^{\tau+\upsilon}$ and $m+1=\sigma+\tau+\upsilon$. Then, for any two allowable values of the rank $r$ and dimension of the kernel $k$ (Proposition~\ref{bounds}) of a putative Hadamard $\qq$-code, we construct this code.
\end{theo}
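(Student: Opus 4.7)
The plan is to proceed case by case through the enumeration of allowable pairs in Proposition~\ref{bounds}, producing for each one an explicit standardized generator set $\{x_1,\dots,x_\sigma;\,r_1,\dots,r_\tau;\,s_1,\dots,s_\upsilon\}$ whose Gray image realizes the prescribed $(r,k)$. The starting point in every case will be a Hadamard $\Z_2\Z_4$-linear code $\cD$ of type $2^{\sigma-\tau}4^\tau$ (or a $\Z_4$-linear one when $r_1^2=\bu$ is required), whose existence is guaranteed by~\cite{prv,Z2Z4}; from $\cD$ we read off the generators $x_1,\dots,x_\sigma, r_1,\dots,r_\tau$ of the abelian maximal subgroup $\cA(\cC)$. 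By Theorem~\ref{prop:z2z4} these must appear duplicated when $\upsilon=1$ and quadruplicated when $\upsilon=2$, and the ambient group $\Z_2^{k_1}\times\Z_4^{k_2}\times\cQ_8^{k_3}$ of Table~\ref{table:ex} is reached by composing the coordinates with the homomorphisms $\chi_1,\chi_2,\chi_3$ of~(\ref{morph1}) in the appropriate pattern dictated by the shape.

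For the abelian cases ($\upsilon=0$) nothing has to be added and the values of $k,r$ listed in item~1 of Proposition~\ref{bounds} are automatic. For $\upsilon=1$ we must append a single generator $s_1$ so as to land in shape $2$, $3$, $4$ or $4^*$, and the entire freedom lies in selecting which of the swappers $(s_1\ccolon r_i)$ belong to $\cC$: each such swapper that lies outside $\cC$ contributes one independent direction to the linear span and removes one dimension from the kernel, as quantified in Proposition~\ref{bounds} and Lemma~\ref{lem:r4b_p2}. To force $(s_1\ccolon r_i)\in\cC$ we arrange the $\cQ_8$-components of $s_1$ on $M(r_i^2)$ so that the resulting swapper equals an element of $T(\cC)$ already present, reading off the admissible pattern from Table~\ref{table:1}; to force $(s_1\ccolon r_i)\notin\cC$ we do the opposite. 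The case $\upsilon=2$ (shape~5) is handled by a symmetric choice of $s_1$ and $s_2$ governing the two critical swappers $(r_1r_2\ccolon s_1s_2)$ and $(r_2\ccolon s_2)$.

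The main obstacle is the intermediate range of ranks in case~4(c) of Proposition~\ref{bounds}: there $k=\sigma$ is fixed but $r$ varies over roughly $\binom{\tauB}{2}$ consecutive integers, and we need to realize each of them. I plan to do this by tuning the $\cQ_8$-support of $s_1$ to intersect prescribed subsets of $\{M(r_1^2),\dots,M(r_\tau^2)\}$, using Lemma~\ref{lem:r4b_p1} to guarantee the balanced intersection pattern that keeps the resulting augmented code Hadamard. After each candidate $\cC$ is written down, one must verify (i) that it is indeed Hadamard, which by Theorem~\ref{prop:z2z4} reduces to checking that $\cA(\cC)$ duplicates or quadruplicates correctly and that the extended matrix $\overline{M}$ has no forbidden column repetition, and (ii) that it has the claimed $(r,k)$, which follows directly from the swapper count provided by Lemma~\ref{previs} together with the shape-specific analysis already carried out in Proposition~\ref{bounds}. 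The parameter ranges of $\sigma,\tau,\upsilon$ compatible with each shape are precisely those listed in Table~\ref{table:ex}, and within this range every allowable pair $(r,k)$ receives a representative, completing the construction.
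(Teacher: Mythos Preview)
Your proof sketch is correct and follows essentially the same route as the paper: start from a Hadamard $\Z_2\Z_4$-linear code $\cD$, lift it to $\cA(\cC)$ via the homomorphisms $\chi_1,\chi_2,\chi_3$ with the duplication/quadruplication dictated by Theorem~\ref{prop:z2z4}, and then append $s_1$ (and $s_2$ for shape~5) with $\cQ_8$-components chosen from $\{\bb,\ba^2\bb\}$ versus $\{\ba\bb,\ba^3\bb\}$ so as to control exactly which swappers $(s_1\ccolon r_i)$ land in $\cC$. The paper makes the case~4(c) step more explicit than your ``tuning'' remark---it partitions the $\cQ_8$-coordinates into $2^{\tauB}$ blocks according to the order-four supports of $r_2,\dots,r_\tau$ and flips a single $\bb$ to $\ba\bb$ per block to reach the top rank, then successively restores whole blocks to $\bb$ to step the rank down one at a time---but this is precisely the mechanism your sketch gestures at, so the two arguments coincide.
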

\begin{proof}
First, we describe the elementary constructions based on the previously defined homomorphisms~(\ref{morph1}). After that, for each of all possibilities given in Lemma~\ref{bounds}, we show how to construct the putative code.

The first step is the construction of the subgroup $\cA(\cC)$ applying~(\ref{morph1}) to some Hadamard $\Z_2\Z_4$-code $\cD$, of type $2^{\sigma-\tau}4^{\tau}$, which can be constructed using the methods described in \cite{prv,Z2Z4}. After obtaining this code $\cD$, from Theorem~\ref{prop:z2z4} we will duplicated it (or quatruplicate it). One of the following ways must be used:
\begin{itemize}
\item From Table~\ref{table:ex} we know that codes of shape 1 or shape $1^*$ have $k_3=0$, so they are Hadamard $\Z_2\Z_4$-codes, $\cC=\cA(\cC)=\cD$.
\item In a code of shape 2, from Table~\ref{table:ex} we have that $k_1=k_2=0$. Also $r_1^2=\bu$ and $\cD$ is a $\Z_4$-linear Hadamard code. So $\cA(\cC)$ can be obtained applying $\chi_2$ component-wise to $\cD$.
\item In a code of shape 3, since $r_1^2 \ne \bu$, $\cD$ must be a $\Z_2\Z_4$-code. Moreover, from Table~\ref{table:ex}, $k_1=0$ and  $\cA(\cC)$ can be obtained applying $\chi_1$ component-wise to $\Z_2$ components of $\cD$ and $\chi_2$ to $\Z_4$ ones. This means that the rate between the number of $\Z_2$ and $\Z_4$ components in a Hadamard $\Z_2\Z_4$-code is extended to theses codes of shape 3.
\item In a code of shape 4, since $r_1^2 \ne \bu$ we have that $\cD$ must be a $\Z_2\Z_4$-code. As $\tau=1$, this code must be of type $2^{2k}4^k$. From Table~\ref{table:ex} $k_2=0$, so $\cA(\cC)$ can be obtained applying $\chi_3$ component-wise to $\Z_2$ components of $\cD$ and $\chi_2$ to the $\Z_4$ components.
\item In a code of shape $4^*$, since $r_1^2 = \bu$ we have that $\cD$ must be a $\Z_4$-code. $\cA(\cC)$ can be obtained applying $\chi_3$ component-wise to half of the $\Z_4$ components of $\cD$ and $\chi_2$ to the rest of components.
\item In a code of shape 5, since $r_1^2 = \bu$, $\cD$ must be a $\Z_4$-code. From Table~\ref{table:ex}, $k_1=k_2=0$, so $\cA(\cC)$ can be obtained applying $\chi_3$ component-wise to all $\Z_4$ components of $\cD$ followed by $\chi_2$ to obtain the desired $\cA(\cC)$.

\end{itemize}

The code $\cC$ is obtained adding one generator (two in case of shape 5) to $\cA(\cC)$. The components of these added generators must follow some restrictions to be sure that a Hadamard code is obtained. In addition, several values remains free of choice. These values are used  to select the target rank and dimension of the kernel.

Specifically, we follow the list of all possibilities given in Proposition~\ref{bounds} and, for each one of them, we show how to construct the putative code.

For codes in item 2) of Proposition~\ref{bounds} we can construct a code of shape 2 (it is also possible to construct codes of shape 3 or shape 4). Code $\cD$ must be in $\Z_4^{m-2}$ with $r_1^2 = \bu$, length $2^{m-1}$, $\sigma=m-1$ and $\tau=1$. This code is of type $2^{\sigma-1}4^1$~\cite{prv}. Now, we add one new generator $s_1$ to $\cA(\cC)$ with its components in $\{\bb,\ba^2\bb\}$ or $\{\ba\bb,\ba^3\bb\}$ obtaining a code $\cC$ with the following generator matrix.

$$\cC=\left( \begin{array}{ccc}
\cA(\cC)&=&\chi_2(\cD) \\
\hline
s_1^{(1)} & \dots & s_1^{(2^{m-2})}
\end{array}
\right)
$$

If we take all $s_1^{(i)}$ components in $\{\bb,\ba^2\bb\}$ then $(s_1 \ccolon r_1) \in \cC$ then $\cC$ is in the subcase 2a) of Proposition~\ref{bounds} where it is proven that the obtained code is linear. If we fill one of the components of $s_1$ with one value in $\{\bb,\ba^2\bb\}$ and the remainder ones with values in $\{\ba\bb,\ba^3\bb\}$ then $(r_1\ccolon s_1) \notin \cC$ (if $m > 3$), we obtain the subcase 2b) where it is proven that $k=\sigma$, $r=\sigma+3$.

For codes in item 3) of Proposition~\ref{bounds} the shape is 3. We begin by taking a Hadamard $\Z_4$-linear code $\cD$ with $r_1^2 = \bu$ and length $2^{m-1}$, $\tau=2$ and $\sigma=m-2$. This code is of type $2^{\sigma-2}4^2$ and the number of $\Z_4$ components is $2^{m-2}$~\cite{prv}.
If $s_1$ is constructed with all components in $\{\bb,\ba^2\bb\}$ then all swappers of $s_1$ with elements of $\cA(\cC)$ are in $\cC$ and we have a linear code according to sub-case 3a) of Proposition~\ref{bounds}.
If $s_1$ takes values in $\{\bb,\ba^2\bb\}$ for all components where $r_2$ has order four, plus one more component (which can be randomly selected), and we take the rest of the components of $s_1$ in $\{\ba\bb,\ba^3\bb\}$, then $(r_1\ccolon s_1) \notin \cC$, $(r_2 \ccolon s_1) \in \cC$ and we are fulfilling item 3b) of Proposition~\ref{bounds}, where it is proven that $k=\sigma+1$ and $r=\sigma+4$.
Finally, if we fill one of the components of $s_1$ with a value in $\{\bb,\ba^2\bb\}$ and the remainder components with values in $\{\ba\bb,\ba^3\bb\}$ then $(r_1\ccolon s_1) \notin \cC$, $(r_2 \ccolon s_1) \not \in \cC$, $(r_1r_2 \ccolon s_1) \not \in \cC$ and we obtain the subcase 3c) of Proposition~\ref{bounds}, where it is proven that $k=\sigma$, $r=\sigma+5$.

For codes in item 4) of Proposition~\ref{bounds} the shape is 2 or 3. We start by taking a Hadamard $\Z_4$-linear code $\cD$ with $r_1^2 = \bu$ and length $2^{m-1}$, $\sigma+\tau=m-2$. This code is of type $2^{\sigma-\tau}4^\tau$~\cite{prv}.
If we select all components of $s_1$ in $\{\bb,\ba^2\bb\}$ then $(s_1 \ccolon r_1)=\be$, we are in the case 4a), where it  is proven that $k = \sigma + \tau-\tauB + 1$, $r=\sigma+\tau+1+\binom{\tauB}{2}$. If we select an element $x \in T(\cC)$ which is not the square of any other element in $\cA(\cC)$ and we replace its zero components by $\bb$ and the components $\ba^2$ by $\ba\bb$, then $(s_1 \ccolon r_1)=x$ and so we are in the case 4b) where it is proven that $k = \sigma+1$ and $r = \sigma+\tau+1+\binom{\tauB+1}{2}$.
To reach the upper bound for the rank in the case 4c),  $k = \sigma$ and $r = \sigma+\tau+1+\binom{\tau}{2}+1$, we need the following construction.

Split all components in two sets taken into account if the value of $r_2$ either has order four or not.
Split each one of these two sets  according if the value of $r_3$ is either of order four or not. Repeat the process again and again for each $r_i$ until $r_\tau$. Since $\langle r_1^2, r_2^2, \ldots, r_{\tau}^2 \rangle$ is a linear subspace of the Hadamard code we obtain $2^{\tau-1}$ sets with $k_3/2^{\tau-1}$ components in each one.
Now, construct the element $s_1$ with the value $\bb$ in all components, except for one component in each one of the previous sets, where we put the value $\ba\bb$.
In this way $(s_1,r_i) \not \in \cC$ for any $2 \ge i \ge \tau$, which assure to obtain the maximum rank $\sigma+\tau+1+\binom{\tau-1}{2}+\tau= \sigma+\tau+1+\binom{\tau}{2}+1$.

A value of the rank equal to one less than the above maximum can be reached if in the constructed $s_1$ we put the value $\bb$ in all components where $r_2$ has order four. Repetitively, we can decrease by one the value of the previous rank by putting the value $\bb$ in all components where $r_2$ or $r_3$ has order four and so on. The lower rank we obtain is $\sigma+\tau+1+\binom{\tau-1}{2}+1$. To obtain the lower limit for the rank, $r = \sigma+\tau+1+\binom{\tau-1}{2}$, we take the value $\bb$ in a component of $s_1$ if some of the generators $r_2 \dots r_\tau$ has the respective component of order four. Otherwise the value $\ba\bb$. Example~\ref{ex:1} shows these constructions.

Now, we deal with codes in item 4, c) of Proposition~\ref{bounds}, of shape 3. The starting point is a Hadamard $\Z_2\Z_4$-linear code $\cD$ with $r_1^2 \not= \bu$ and length $2^{m-1}$, $\sigma+\tau=m-2$. This code is of type $2^{\sigma-\tau}4^\tau$ and has $2^{\sigma-1}$ binary components and $(2^\tau-1)2^{\sigma-2}$ quaternary components~\cite{prv}. After applying $\chi_1,\chi_2$ to the binary and quaternary components, respectively, we define $s_1$ taking in all the quaternary components de value $1$ and using the same technique as before, splitting the quaternionic components, to decide the values in these components.
We obtain $(2^\tau-1)$ sets with $2^{\sigma-2}$ components in each one. The maximum rank we obtain is $\sigma+\tau+1+\binom{\tau}{2}+\tau= \sigma+\tau+1+\binom{\tau+1}{2}$. The minimum is not as before, but $\sigma+\tau+\upsilon+\binom{\tau}{2}+1$. Indeed, when the rank is $\sigma+\tau+\upsilon+\binom{\tau}{2}$, the constructed $s_1$ belongs to the kernel and so $k=\sigma+1$ (this corresponds to the case 4a). Example~\ref{ex:2} shows these constructions.

For codes in item 5) of Proposition~\ref{bounds} we must construct a code of shape 5. We begin by taking a Hadamard $\Z_4$-linear code $\cD$ with $r_1^2 = \bu$, length $2^{m-2}$, $\tau=2$ and $\sigma=m-4$. This code is of type $2^{\sigma-3}4^2$ and the number of quaternary components is $2^{m-3}$~\cite{prv}. Now, as we said before, we can obtain $\cA(\cC)$ as $\chi_2(\chi_3(\cD))$ and $\cC=\langle \cA(\cC), s_1, s_2 \rangle$. The following matrix is a generator matrix for $C$:
$$\left( \begin{tabular}{cccccccccc}
\multicolumn{10}{c}{$\cA(\cC)=\chi_2(\chi_3(\cD))$} \\
\hline
$s_1^{(1)}$ & & \dots & & $s_1^{(2^{m-3})}$ & $1$ & $\ba^2$ & $\stackrel{2^{m-3}}{\dots}$ & $1$ & $\ba^2$ \\
$1$ & $\ba^2$ & $\stackrel{2^{m-3}}{\dots}$ & $1$ & $\ba^2$ & $s_2^{(2^{m-3}+1)}$ & & \dots & & $s_2^{(2^{m-2})}$ \\
\end{tabular}
\right)
$$
If all components of order four of $s_1$ and $s_2$ are in $\{\bb,\ba^2\bb\}$ then all swappers of $s_1$ and $s_2$ with elements of $\cA(\cC)$ are in $\cC$, so we have the linear case according to item 5a) of Proposition~\ref{bounds}.
Say that the first component of $r_1$ is of order four, but the first component of $r_2$ is of order at most two. Take $s_1^{(1)}=\ba\bb$ and the rest of components of order four of $s_1$ and $s_2$ are equal to $\bb$ then $(s_1s_2\colon r_1r_2) \not \in \cC$, $(s_2 \ccolon r_2) \in \cC$ and item 5b) of Proposition~\ref{bounds} is fulfilled, reaching a code with $k=\sigma+2$ and $r=\sigma+5$.
Finally, if $s_1^{(1)}=s_2^{(2^{m-3}+1)}=\ba\bb$ and the rest of components of order four of $s_1$ and $s_2$ are equal to $\bb$ then  $(s_1s_2\colon r_1r_2) \not \in \cC$, $(s_2 \ccolon r_2) \notin \cC$ and item 5c) of Proposition~\ref{bounds} is fulfilled, reaching a code with $k=\sigma$ and $r=\sigma+6$.
\end{proof}

For a generic Hadamard $\qq$-code, the range of rank values as well as the range of values given by the dimension of the kernel depends on the specific shape of the code. However, summing up the Proposition~\ref{bounds} and all results in this section about constructions, we can establish a tight upper and lower bound for the values of the rank and dimension of the kernel for Hadamard $\qq$-codes.  The next theorem gives these bounds, which improve the ones previously given in \cite{rio-2013}. Further, in this section we have given constructions of Hadamard $\qq$-codes covering all allowable values for the pair rank, dimension of the kernel.

\begin{theo}\label{fites}
Let $C$ a Hadamard $\qq$-code of length $2^m$ and $|T(\cC)|=2^\sigma$, where $T(\cC)$ is the subgroup of elements in $\cC$ of order two; $| \cC / \cA(\cC)|=2^\upsilon$; $|\cA(\cC) / T(\cC)|=2^\tau$; $|\cR(\cC) / T(\cC)|=2^\tauB$; $|\cC / T(\cC)|=2^{\tau+\upsilon}$ and $m+1=\sigma+\tau+\upsilon$. Then the rank $r$ and the dimension of the kernel $k$ of $C$ satisfy the following conditions.
\begin{enumerate}
\item The values of the dimension of the kernel are $1\not= m+1-k\in \{0, 4,\tau-1,\tau,\tau+1\}$. The specific case $m+1-k=0$ is obtained in codes where $\tauB\leq 1$ or in codes of shape 5. The specific case $m+1-k=4$ is obtained in codes of shape 5.
\item
\begin{enumerate}
\item If $m+1-k=0$ then we have $r-(m+1)=0$,
\item If $m+1-k=4$ and $\upsilon=2$ then we have $r-(m+1)=2$,
\item If $m+1-k=\tau-1\geq 2$ then we have $r-(m+1)=\binom{\tau-1}{2}$,
\item If $m+1-k=\tau \geq 2$ then we have $r-(m+1)=\binom{\tau}{2}$,
\item If $m+1-k=\tau+1$ and $\tauB <= 1$ then we have $r-(m+1)=\tau$.
\item If $m+1-k=\tau+1$ and $\tauB=\tau-1 \ge 2$ then we have $r-(m+1)\in \{\binom{\tau-1}{2},\ldots \binom{\tau}{2}+1\}$.
\item If $m+1-k=\tau+1$ and $\tauB=\tau \ge 2$ then we have $r-(m+1)\in \{\binom{\tau}{2}+1,\ldots \binom{\tau+1}{2}\}$.
\end{enumerate}
\end{enumerate}
\end{theo}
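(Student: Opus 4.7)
The plan is to derive Theorem~\ref{fites} as a direct consolidation of Proposition~\ref{bounds}, using the constructions in Theorem~\ref{constructions} to certify that every listed pair is actually realised. The task is thus twofold: first, exhaust the five items of Proposition~\ref{bounds} and translate the expressions for $k$ and $r$ into the normalised quantities $m+1-k$ and $r-(m+1)$, using the identity $m+1=\sigma+\tau+\upsilon$; second, observe that the list of pairs that appears in this translation coincides exactly with the enumeration in the statement.

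For part (1), I would argue by the trichotomy for $K(C)$ that appears implicitly in the proof of Proposition~\ref{bounds}: in every case the kernel is one of $\cC$, $\gen{T(\cC),r_1,bs_1}$ (or its analogues $\gen{T(\cC),r_1,r_2,s_1}$, $\gen{T(\cC),r_1}$), or $T(\cC)$. Computing the index $|\cC:K(C)|$ in each situation gives exactly $m+1-k\in\{0,\tau-1,\tau,\tau+1\}$, together with the exceptional value $4$ which occurs only for shape 5 in item 5) of Proposition~\ref{bounds}. The exclusion $m+1-k\ne 1$ is then automatic: the quotient $\cC/K(C)$ can never be cyclic of order $2$, since if $\Phi(c)\notin K(C)$ then $c$ necessarily produces at least two independent non-$\cC$ swappers, or else the previous case analysis forces $c\in K(C)$. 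The tauB-dependent constraints on when $m+1-k=0$ (namely $\tauB\le 1$ or shape 5) are read off verbatim from items 1a), 2a), 3a), 5a).

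For part (2), each subitem is a bookkeeping exercise: in item 4a) one reads $k=\sigma+\tau-\tauB+1$ and $r=\sigma+\tau+1+\binom{\tauB}{2}$, so using $\tauB=\tau-1$ (shape 2) gives $m+1-k=\tau-1$ and $r-(m+1)=\binom{\tau-1}{2}$, matching case (c); while $\tauB=\tau$ (shape 3) gives $m+1-k=\tau$ and $r-(m+1)=\binom{\tau}{2}$, matching case (d). Similarly, items 4b) and 4c) of Proposition~\ref{bounds} become cases (f) and (g) of Theorem~\ref{fites}, items 2) and 3) collapse into cases (e) and into subcases of (c)-(d) for small $\tau$, and item 5) supplies case (b). For the extremal case (a) with $m+1-k=0$, linearity forces $r=m+1$ as well. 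The fact that each listed range is fully attained—and nothing outside is attained—is exactly the content of Theorem~\ref{constructions} combined with the maximality arguments embedded in the proof of Proposition~\ref{bounds}.

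The main obstacle I anticipate is not conceptual but combinatorial: ensuring that the five shapes and the several values of $\tau,\tauB,\upsilon$ partition correctly into the seven subcases (a)–(g), with no pair $(k,r)$ missing and no spurious pair added. To make this watertight I would present a single table with columns $(\text{shape},\tau,\tauB,\upsilon,m+1-k,r-(m+1))$ filled using the exact formulas from Proposition~\ref{bounds}, and then observe by inspection that its rows project onto precisely the list of Theorem~\ref{fites}. The achievability direction in cases (f) and (g), where $r-(m+1)$ ranges over a full interval of integers, is covered by the explicit construction in the proof of Theorem~\ref{constructions} in which one tunes the number of coordinates of $s_1$ that are set to $\bb$ versus $\ba\bb$; this was already illustrated there and requires no new argument.
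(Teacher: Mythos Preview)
Your proposal is correct and matches the paper's own treatment: the paper does not give a separate proof of Theorem~\ref{fites} but presents it explicitly as a ``compendium'' obtained by summing up Proposition~\ref{bounds} and the constructions in Theorem~\ref{constructions}, exactly as you propose, and the paper's Tables~\ref{taula1} and~\ref{taula2} are precisely the bookkeeping table you suggest assembling. The only small caveat is your justification for $m+1-k\ne 1$: the paper does not give the conceptual swapper-counting argument you sketch, it simply obtains this by exhaustion of the cases in Proposition~\ref{bounds}, so you should make sure your argument there is airtight or else fall back on the case exhaustion.
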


\begin{example}\label{ex:1}
{\rm
The following example shows constructions of codes of length $n=2^m=2^7=128$, with $\tau=3 \ge \tauB=2 \ge 2, \upsilon=1$ (item 4 of Proposition~\ref{bounds}) and  $\sigma=4$. The resulting codes are of shape 2 and, before the Gray map, subgroups of $\cQ_8^{32}$.  All possible pairs of rank and dimension of the kernel are presented:

Let $\overline{r_1}, \overline{r_2}, \overline{r_3}, \overline{x_1} \in Q_8^{16}$ be the vectors:
$$
\begin{array}{cccccccccccccccc}
\overline{r_1} = ( \ba& \ba& \ba& \ba& \ba& \ba& \ba& \ba& \ba& \ba& \ba& \ba& \ba& \ba& \ba& \ba) \\
\overline{r_2} = ( \ba& \ba& \ba^3& \ba^3& \ba& \ba& \ba^3& \ba^3& \one& \one& \ba^2& \ba^2& \one& \one& \ba^2& \ba^2 ) \\
\overline{r_3} = ( \ba& \ba^3& \ba& \ba^3& \one& \ba^2& \one& \ba^2& \ba& \ba^3& \ba& \ba^3& \one& \ba^2& \one& \ba^2 ) \\
\overline{x_1} = ( \one& \one& \one& \one& \one& \one& \one& \one& \one& \one& \one& \one& \one& \one& \one& \one)\\
\overline{x_2} = (\ba^2 &\ba^2 &\ba^2 &\ba^2 &\ba^2 &\ba^2 &\ba^2 &\ba^2 &\ba^2 &\ba^2 &\ba^2 &\ba^2 &\ba^2 &\ba^2 &\ba^2 &\ba^2 )
\end{array}
$$
and now, take the following vectors in $Q_8^{32}$:
$$
\begin{array}{cc}
r_1 = ( \overline{r_1}, \overline{r_1})  \\
r_2 = ( \overline{r_2},\overline{r_2} ) \\
r_3 = ( \overline{r_3} , \overline{r_3}  ) \\
x_1 = ( \overline{x_1}, \overline{x_2})
\end{array}
$$

Let $y_1, y_2, y_3, y_4, y_5, y_6\in Q_8^{16}$ be the vectors:
$$
\begin{array}{cccccccccccccccc}
y_1= ( \bb& \bb& \bb& \bb& \bb& \bb& \bb& \bb& \bb& \bb& \bb& \bb& \bb& \bb& \bb& \bb)\\
y_2= (\ba\bb& \ba\bb&\ba\bb&\ba\bb&\ba\bb&\ba\bb&\ba\bb&\ba\bb&\ba\bb&\ba\bb&\ba\bb&\ba\bb&\ba\bb&\ba\bb&\ba\bb&\ba\bb)\\
y_3= ( \bb& \bb& \bb& \ba\bb& \bb& \bb& \bb& \ba\bb& \bb& \bb& \bb& \ba\bb& \bb& \bb& \bb& \ba\bb)\\
y_4= ( \bb& \bb& \bb& \bb& \bb& \bb& \bb& \bb& \bb& \bb& \bb& \ba\bb& \bb& \bb& \bb& \ba\bb)\\
y_5= ( \bb& \bb& \bb& \bb& \bb& \bb& \bb& \bb& \bb& \bb& \bb& \bb& \bb& \bb& \bb& \ba\bb)\\
y_6= ( \bb& \bb& \bb& \bb& \bb& \bb& \bb& \bb& \bb& \bb& \bb& \bb& \ba\bb& \ba\bb& \ba\bb& \ba\bb)\\
\end{array}
$$
The codes with all possible pairs of values rank,dimension of the kernel are generated by $r_1, r_2, r_3$ and $s_1$ which is taken following Theorem~\ref{constructions}. We show the vector $s_1$ and the values of the pair rank, dimension of the kernel.

When $s_1=(y_1,y_1)$ the constructed code has $k=6,r=9$.

When $s_1=(y_2,y_1)$ the constructed code has $k=5,r=11$.

When $s_1=(y_3,y_3)$ the constructed code has $k=4,r=12$.

When $s_1=(y_4,y_4)$ the constructed code has $k=4,r=11$.

When $s_1=(y_5,y_5)$ the constructed code has $k=4,r=10$.

When $s_1=(y_6,y_6)$ the constructed code has $k=4,r=9$.
}
\end{example}

\begin{example}\label{ex:2}
{\rm
The following example shows constructions of codes of length 32, with $\tau=\tauB=2$, $\upsilon=1$
(item 4 of Proposition IV.3) and $\sigma = 3$. The resulting codes are of shape 3 and, before the Gray map, subgroups of
 $\Z_4^4\cQ_8^{6}$.  All possible pairs of rank and dimension of the kernel are presented.

 Take the following vectors in $\Z_4^4\cQ_8^{6}$:
$$
\begin{array}{cccccccccc}
r_1 = ( 0& 2& 0& 2& \one& \ba^2& \ba& \ba& \ba& \ba ) \\
r_2 = ( 0& 0& 2& 2& \ba& \ba& \one& \ba^2& \ba& \ba^3 )
\end{array}
$$
The codes with all possible pairs of values rank, dimension of the kernel are generated by $r_1, r_2$ and $s_1$ which is taken following Theorem~\ref{constructions}. We show the vector $s_1$ and the values of the pair rank, dimension of the kernel.

When $s_1= ( 1,1,1,1, \bb, \bb, \bb, \bb, \bb, \bb )$ the constructed code has $k=4,r=7$.

When $s_1= ( 1,1,1,1, \bb, \ba\bb, \bb, \ba\bb, \bb, \ba\bb )$ the constructed code has $k=3,r=9$.

When $s_1= ( 1,1,1,1, \bb, \ba\bb, \bb, \bb, \bb, \bb )$ the constructed code has $k=3,r=8$.
}
\end{example}

\end{document}